\newcommand{\newsection}[1]{\setcounter{equation}{0} \section{#1}}
\newcommand{\bea}{\begin{eqnarray}}
\newcommand{\eea}{\end{eqnarray}}
\newcommand{\clb}{\mathcal{B}}
\newcommand{\cld}{\mathcal{D}}
\newcommand{\cle}{\mathcal{E}}
\newcommand{\clf}{\mathcal{F}}
\newcommand{\clh}{\mathcal{H}}
\newcommand{\clk}{\mathcal{K}}
\newcommand{\clo}{\mathcal{O}}
\newcommand{\clq}{\mathcal{Q}}
\newcommand{\clr}{\mathcal{R}}
\newcommand{\clt}{\mathcal{T}}
\newcommand{\D}{\mathbb{D}}
\newcommand{\N}{\mathbb{N}}
\newcommand{\C}{\mathbb{C}}
\newcommand{\bs}{\mathbb S}
\newcommand{\raro}{\rightarrow}
\def \qed {\hfill \vrule height6pt width 6pt depth 0pt}
\def\textmatrix#1&#2\\#3&#4\\{\bigl({#1 \atop #3}\ {#2 \atop #4}\bigr)}
\def\dispmatrix#1&#2\\#3&#4\\{\left({#1 \atop #3}\ {#2 \atop #4}\right)}
\newcommand{\be}{\begin{equation}}
\newcommand{\ee}{\end{equation}}
\newcommand{\ben}{\begin{eqnarray*}}
\newcommand{\een}{\end{eqnarray*}}
\newcommand{\bi}{\begin{itemize}}
\newcommand{\ei}{\end{itemize}}
\newtheorem{Theorem}{\sc Theorem}[section]
\newtheorem{Lemma}[Theorem]{\sc Lemma}
\newtheorem{Proposition}[Theorem]{\sc Proposition}
\newtheorem{Corollary}[Theorem]{\sc Corollary}
\newtheorem{Definition}[Theorem]{\sc Definition}
\newtheorem{Example}[Theorem]{\sc Example}
\newtheorem{Remark}[Theorem]{\sc Remark}
\newtheorem{Note}[Theorem]{\sc Note}
\newtheorem{Question}{\sc Question}
\newtheorem{ass}[Theorem]{\sc Assumption}
\newcommand{\bt}{\begin{Theorem}}
\def\beginlem{\begin{Lemma}}
\def\beginprop{\begin{Proposition}}
\def\begincor{\begin{Corollary}}
\def\begindef{\begin{Definition}}
\def\beginexamp{\begin{Example}}
\def\beginrem{\begin{Remark}}
\def\beginq{\begin{Question}}
\def\beginass{\begin{ass}}
\def\beginnote{\begin{Note}}
\newcommand{\et}{\end{Theorem}}
\def\endlem{\end{Lemma}}
\def\endprop{\end{Proposition}}
\def\endcor{\end{Corollary}}
\def\enddef{\end{Definition}}
\def\endexamp{\end{Example}}
\def\endrem{\end{Remark}}
\def\endq{\end{Question}}
\def\endass{\end{ass}}
\def\endnote{\end{Note}}
\begin{document}

\title{Factors of hypercontractions}

\author[Bhattacharjee]{Monojit Bhattacharjee}
\address{Department of Mathematics, Indian Institute of Technology Bombay, Powai, Mumbai, 400076,
India}
\email{mono@math.iitb.ac.in, monojit.hcu@gmail.com}

\author[Das] {B. Krishna Das}
\address{Department of Mathematics, Indian Institute of Technology Bombay, Powai, Mumbai, 400076,
India}
\email{dasb@math.iitb.ac.in, bata436@gmail.com}

\subjclass[2010]{47A13, 47A20, 47A45, 47A56, 46E22, 47B32, 32A36, 47B20} \keywords{Hypercontraction, Bergman space over the unit disc, commuting contractions,  bounded analytic functions}

\begin{abstract}
In this article, we study a class of contractive factors
of $m$-hypercontractions for  $m\in \N$.
We find a characterization of such factors 
and this is achieved by finding explicit dilation of these factors on 
some weighted Bergman spaces. This is a generalization of the work 
done in ~\cite{DSS}.
\end{abstract}

\maketitle

\newsection{Introduction}
The structure of a commuting $n$-tuple of isometries $(n\ge 2)$ 
is complicated compare to that of a single isometry due to 
von Neumann and
Wold (cf. \cite{NF}).
Not much is known except the BCL representation 
for an $n$-tuple of isometries with product being a pure isometry
 (see 
 \cite{ BDF1, BDF2,BDF,BCL,DDS, GY, Yang} and references therein), 
 that is for an $n$-tuple of isometries $(V_1,\dots,V_n)$ on $\clh$
 with  
\[
 \cap_{k\ge 0}V_1^kV_2^k\cdots V_n^k \clh =\{0\}.
\]
The structure theorem of such isometries 
also reveals all possible isometric factors of a pure isometry ~\cite{BCL}.
Following this, the analysis of finding factors has been extended 
further to the case of contractions, recently. 
A characterization of contractive factors of a pure contraction
is obtained, by Sarkar, Sarkar and the second author of this atricle,
 in ~\cite{DSS} and subsequently in ~\cite{Hari} for 
general contractions.
More specifically, it is shown that for a contraction 
$T$ on a Hilbert space $\clh$, the following are equivalent: 
\begin{enumerate}[(i)]
\item $T=T_1T_2$ for some commuting
contractions $T_1$ and $T_2$ on $\clh$; \\
\item there exist a triple $(\cle,U,P)$
consisting of a Hilbert space 
$\cle$, a unitary $U$ and an orthogonal projection $P$,
 a pair of commuting unitaries $(W_1, W_2)$ 
 on a Hilbert space $\clr$ with $W=W_1W_2$ and a joint
$(M_z^*\oplus W^*, M_{\Phi}^* \oplus W_1^*, M_{\Psi}^* \oplus W_2^*)$-invariant subspace $\clq$ of $H^2_{\cle}(\D) \oplus \clr $ such that 
\[ 
T_1 \cong P_{\clq}(M_{\Phi} \oplus W_1)|_{\clq},
\,\, T_2 \cong P_{\clq}(M_{\Psi} \oplus W_2)|_{\clq},
\,\, T \cong P_{\clq}(M_z \oplus W)|_{\clq}
\] 
where $\Phi(z) = (P+zP^{\perp})U^*$
and $\Psi(z)= U(P^{\perp} + zP)$ for all $z $ in the unit disc $ \D$.   
\end{enumerate}
Moreover, $(M_{\Phi} \oplus W_1)(M_{\Psi} \oplus W_2)
= (M_{\Psi} \oplus W_2)(M_{\Phi} \oplus W_1)
= M_z \oplus W$.    
In the case of a pure contraction $T$,
the Hilbert space $\clr=\{0\}$
and therefore all the direct summands disappear.
It is also worth mentioning here that the key to obtain
such a characterization is an explicit
Ando type dilation result and it is motivated
by a recent technique of finding explicit dilation found in ~\cite{DS}.
It is then natural to ask the following question:
How to characterize contractive factors of $m$-hypercontractions?
In this article, we 
answer this question and obtained a complete description
 for a class of contractive factors of $m$-hypercontractions.
 Our characterization for contractive factors of $m$-hypercontractions 
 induces a similar characterization of factors for subnormal operators 
 and, for $m=1$, it recovers the characterization obtained in ~\cite{DSS}
 and ~\cite{Hari}.
To describe these results, we develop
some background materials next.



For a Hilbert space $\cle$ and $n\in \N$,
the $\cle$-valued weighted Bergman space over the unit disc,
denoted by $A^2_n(\cle)$, is defined as 
\[ 
A^2_n(\cle)= \{f \in \clo(\D, \cle): 
f(z)= \sum_{k=0}^{\infty} \hat{f}(k)z^k,\|f\|^2_n = \sum_{k=0}^{\infty} (w_{n,k})^{-1} \|\hat{f}(k)\|^2_{\cle} < \infty \}, 
\] 
where the sequence of weights $\{w_{n,k}\}_{k\ge 0}$ is given by
\[ 
(1-x)^{-n} = \sum_{k=0}^{\infty} w_{n,k} x^k, \hspace{1cm} (|x| < 1).
\]
It is also a reproducing kernel Hilbert space with kernel 
\[
 K_n(z,w)=(1-z\bar{w})^{-n}I_{\cle}\quad (z,w\in\D).
\]
For the base case $n=1$, the space $A^2_1(\cle)$
is known as the Hardy space over the unit disc
which we denote by $H^2_{\cle}(\D)$ and denote
the corresponding kernel, known as the Szeg\"{o} kernel, by 
\[
\mathbb S(z,w)=(1-z\bar{w})^{-1}I_{\cle}\quad (z,w\in\D).
\]   
If $\cle=\C$, then we denote simply by $A^2_n$ the 
$\C$-valued weighted Bergman space over the unit disc.
The notion of $m$-hypercontractions $(m\in\N)$,
introduced by Agler in his seminal paper ~\cite{JA2},
is defined as follows.
A bounded linear operator $T$ on $\clh$ is an
$m$-hypercontraction if it satisfies 
\begin{align*}
K_n^{-1}(T,T^*)  
     &= \sum_{k=0}^n (-1)^k \left(
    \begin{array}{c}
      n \\
      k
    \end{array} \right)T^kT^{*k} \geq 0,
  \end{align*}  
for $n=1, m$. In addition,
if $T^{* n}\to 0$ in the strong operator topology then $T$ is
said to be a pure $m$-hypercontraction.
It is important to note that the positivity $K_n^{-1}(T,T^*)\ge 0$ 
for $n=1,m$ also implies all the intermediate positivity, that is 
$K_n^{-1}(T,T^*)\ge 0$ for all $n=1,\dots, m$ (\cite{MV}).
This shows that if $T$ is an $m$-hypercontraction then it is also 
an $n$-hypercontraction for all $n=1,\dots,m$.
The \textit{defect operators} and the \textit{defect spaces} of an $m$-hypercontraction $T$ on $\clh$ are defined by 
\begin{align}
\label{defect}
D_{n,T}= \Big(K_n^{-1}(T,T^*) \Big)^{\frac{1}{2}}\ \text{ and }
\cld_{n,T}= \overline{\text{ran}}D_{n,T}, \quad (1\le n\le m) 
\end{align} respectively.
The Bergman shift $M_z$ on $A^2_m(\cle)$,  defined by
\[ \big(M_zf\big)(w)= wf(w) \quad (f \in A^2_m(\cle), w\in \D), \]
is a pure $m$-hypercontraction.
In fact, by ~\cite{JA2}, 
the Bergman shifts are model of
pure $m$-hypercontractions. 
To be more precise, Agler proves the following characterization result.

\begin{Theorem}\textup{(cf. ~\cite{JA2})}\label{Agler}
If $T$ is an $m$-hypercontraction on a Hilbert space $\clh$
then 
\[T\cong P_{\clq}(M_z\oplus W)|_{\clq},\]
 where
$W$ is a unitary on a Hilbert space $\clr$, 
$\clq$ is a $(M_z^*\oplus W^*)$-invariant subspace of
$A^2_m(\cld_{m,T})\oplus \clr$ and
$\cld_{m,T}$ is the defect space of $T$ as in ~\eqref{defect}.
In addition, if $T$ is pure then the Hilbert space 
$\clr=\{0\}$.
\end{Theorem}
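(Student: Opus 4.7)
The plan is to construct an explicit isometric embedding $\Pi : \clh \to A^2_m(\cld_{m,T}) \oplus \clr$ intertwining $T^*$ with $M_z^* \oplus W^*$, and then take $\clq := \Pi(\clh)$, which will automatically be $(M_z^*\oplus W^*)$-invariant. I would first dispose of the pure case and then patch in a unitary summand for the general case.

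In the pure case, the natural candidate for $\Pi$ is the canonical dilation
\[
(\Pi h)(z) \;=\; D_{m,T}(I - zT^*)^{-m}\, h \;=\; \sum_{k=0}^{\infty} w_{m,k}\, D_{m,T} T^{*k} h \, z^k \qquad (z \in \D,\; h \in \clh).
\]
Using the definition of the norm on $A^2_m(\cld_{m,T})$ and the generating identity $(1-x)^{-m} = \sum_k w_{m,k} x^k$, one checks directly from Taylor coefficients that $\Pi T^* = M_z^* \Pi$. The main technical obstacle is showing that $\Pi$ is an isometry when $T^{*n} \to 0$ strongly, equivalently the Bergman--Parseval identity
\[
\sum_{k=0}^{\infty} w_{m,k}\, T^k D_{m,T}^2 T^{*k} \;=\; I_{\clh} \quad \text{in SOT}.
\]
I would prove this by induction on $m$, combining the binomial recurrence $w_{m,k} = w_{m,k-1} + w_{m-1,k}$ with the operator recurrence
\[
K_m^{-1}(T, T^*) \;=\; K_{m-1}^{-1}(T, T^*) - T\, K_{m-1}^{-1}(T, T^*)\, T^*,
\]
which is a short Pascal-triangle computation from the binomial formula for $K_n^{-1}$. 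Together these produce a telescoping that reduces the $m$-th identity to the $(m-1)$-th modulo a remainder of the form $T^{N+1}(\cdots) T^{*(N+1)}$ vanishing strongly by purity; the base case $m=1$ is the classical Sz.-Nagy/Wold telescoping $\sum_k T^k(I-TT^*) T^{*k} = I - \lim T^N T^{*N}$. Here the intermediate positivity $K_n^{-1}(T,T^*) \ge 0$ for all $1 \le n \le m$, already noted in the excerpt, is crucial to keep the remainder nonnegative.

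For a general $m$-hypercontraction, the partial sums $S_N := \sum_{k=0}^N w_{m,k} T^k D_{m,T}^2 T^{*k}$ are increasing (each summand is $\ge 0$) and bounded above by $I$, so $S := \mathrm{s\text{-}lim}_N S_N$ exists and $Q := I - S \ge 0$. The telescoping remainder from the previous step identifies $Q$ as a strong limit of certain $T^N(\cdots) T^{*N}$ terms, from which $TQT^* = Q$ follows; consequently $Q^{1/2} h \mapsto Q^{1/2} T^* h$ extends to a well-defined isometry on $\overline{\mathrm{ran}}\, Q^{1/2}$, whose minimal unitary extension on some Hilbert space $\clr$ supplies the desired $W$. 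The full embedding $h \mapsto (\Pi h,\, Q^{1/2} h) \in A^2_m(\cld_{m,T}) \oplus \clr$ is then isometric (by the definition of $Q$) and intertwines $T^*$ with $M_z^* \oplus W^*$, so its image $\clq$ is $(M_z^* \oplus W^*)$-invariant and $T \cong P_\clq(M_z \oplus W)|_\clq$. The delicate final point is verifying that the constructed $W$ is genuinely unitary rather than merely isometric, which requires inspecting the action of $T$ (not only $T^*$) on $\overline{\mathrm{ran}}\, Q^{1/2}$ and invoking a minimal unitary dilation at this last step.
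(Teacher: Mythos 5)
Your construction is essentially the paper's own argument (Theorem~\ref{Dilation} together with Proposition~\ref{norm identity}): the canonical map $\Pi_{m,T}$, the defect $Q_{m,T}^2=\textup{SOT-}\lim_r f_r^{(m)}(T,T^*)$, the identity $TQ_{m,T}^2T^*=Q_{m,T}^2$, the isometry $Q_{m,T}h\mapsto Q_{m,T}T^*h$ extended to a minimal unitary $W$ on $\clr\supseteq\clq_{m,T}$, and the embedding $h\mapsto(\Pi_{m,T}h,Q_{m,T}h)$, whose range is the $(M_z^*\oplus W^*)$-invariant subspace $\clq$. Your final worry about $W$ being ``genuinely unitary'' is unnecessary: once $TQ_{m,T}^2T^*=Q_{m,T}^2$, the map $Q_{m,T}h\mapsto Q_{m,T}T^*h$ is a well-defined isometry of $\overline{\mathrm{ran}}\,Q_{m,T}$ into itself (in the paper it is produced via Douglas's lemma), and one simply passes to its minimal unitary extension; no inspection of the action of $T$ itself is required, since the theorem only needs the intertwining $W^*Q_{m,T}=Q_{m,T}T^*$.

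One step is justified too quickly in your pure case. The telescoping remainder is not of the form $T^{N+1}(\cdots)T^{*(N+1)}$ with a fixed middle factor: it is $w_{m,N}\,T^{N+1}K_{m-1}^{-1}(T,T^*)T^{*(N+1)}$, and since $w_{m,N}\sim N^{m-1}\to\infty$, purity ($T^{*N}\to 0$ in SOT) alone does not make it vanish. What is needed is the following: the scalars $c_N=\langle K_{m-1}^{-1}(T,T^*)T^{*N}h,T^{*N}h\rangle$ are decreasing (because $K_m^{-1}(T,T^*)\ge 0$), the series $\sum_N w_{m-1,N}c_N$ converges (its partial sums are $\langle (I-f_{N+1}^{(m-1)}(T,T^*))h,h\rangle\le\|h\|^2$), and $w_{m,N}=\sum_{j\le N}w_{m-1,j}$; a Kronecker-type estimate combined with $c_N\to 0$ (here purity enters) then kills the remainder. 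This is precisely Lemma 2.11 of Agler, which the paper cites at exactly this point, so your sketch is fine once you either cite that lemma or supply the above argument rather than appealing to purity alone.
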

\noindent There are now several different approach to this result 
and to its multivariable generalization
for different domains in $\C^n$
(see ~\cite{JA1},\cite{AM}, \cite{AEM}, ~\cite{CV1},
~\cite{CV2} , \cite{MV} and ~\cite{O}).


Now coming back to the context of this article,
we denote by $\clf_m(\clh)$ the class of contractive factors
of  $m$-hypercontractions on a Hilbert space $\clh$
which we characterize in this paper. The class 
is defined as follows.
\begin{Definition}
For $m\in\N$ and a Hilbert space $\clh$, a pair of operators 
$(T_1,T_2)$ is said to be an element of $\clf_m(\clh)$
if 
\begin{itemize}
 \item [\textup{(i)}]
 $T_1$ and $T_2$ are commuting contractions, and 
 \item [\textup{(ii)}]
 for all $i=1,2$, $K_{m-1}^{-1}(T,T^*)-T_iK_{m-1}^{-1}(T,T^*)T_i^*\ge 0$
 where $T=T_1T_2$ and $K_0(T,T^*)=I_{\clh}$. 
\end{itemize}
\end{Definition}
The positivity condition in (ii) is equivalent to the Szeg\"{o}
positivity of the commuting operator tuple
\[
\mathcal T_i=(\underbrace{T,\dots, T}_{(m-1)-\text{times}},T_i)
\]
for all $i=1,2$. Here for an $n$-tuple of commuting contraction 
$\mathcal T=(T_1,\dots, T_n)$, the Szeg\"{o} positivity of $\clt$
is defined as 
\[
 \bs_n^{-1}(\clt, \clt^*)= 
 \sum_{F\subset \{1,\dots,n\}}(-1)^{|F|}\clt_F\clt_F^*,
\]
where for $F\subset \{1,\dots,n\}$, $\clt_F=\prod_{i\in F}T_i$.
For $m=1$, the condition (ii) follows from (i). For that reason, 
$\clf_1(\clh)$ is the class of all commuting contractive operator 
pairs on $\clh$. 
For $(T_1,T_2)\in \clf_m(\clh)$
we show that their product contraction
$T=T_1T_2$ is an $m$-hypercontraction on $\clh$.
In other words, for any $m\in\N$, $\clf_m(\clh)$ contains 
contractive factors of $m$-hypercontractions on $\clh$.
In particular, this also provides a sufficient condition 
for the product of a pair of commuting contractions $(T_1,T_2)$ on $\clh$
to be an
$m$-hypercontraction and the sufficient 
condition is simply that $(T_1,T_2)\in \clf_m(\clh)$. This
sufficient condition is not necessary as 
we find counterexamples.
The goal of this article is to describe the class 
of contractive factors $\clf_m(\clh)$ of $m$-hypercontractions, 
completely. 
One such explicit descriptions 
we obtain is as follows. For a Hilbert space $\cle$, 
a bounded analytic function $\Phi:\D\to \clb(\cle)$
is a $\clb(\cle)$-valued Schur function on $\D$
if 
\[
 \text{sup}_{z\in\D}\|\Phi(z)\|\le 1.
\]

If $T$ is a $m$-hypercontraction on a Hilbert space $\clh$,
then the following        
are equivalent: 

(i) $T=T_1T_2$ for some $(T_1,T_2)\in\clf_m(\clh)$;

(ii) there exist a pair of commuting unitaries $(W_1,W_2)$ 
on a Hilbert space  $\clr$
with $W=W_1W_2$ and 
a pair of $\clb(\cle)$-valued Schur functions on $\D$
\[
 \Phi(z) = (P+zP^{\perp})U^*, \text{ and }
 \Psi(z)= U(P^{\perp} + zP),\quad  (z \in \D)
\]
corresponding to a triple 
$(\cle,U,P)$ consisting of a Hilbert space 
$\cle$, a unitary $U$ on $\cle$ and an orthogonal projection $P$
in $\clb(\cle)$ such that 
$\clq$ is a joint
$(M_z^*\oplus W^*, M_{\Phi}^* \oplus W_1^*, M_{\Psi}^* \oplus W_2^*)$-invariant subspace of $A^2_m(\cle) \oplus \clr $ and 
\[ 
T_1 \cong P_{\clq}(M_{\Phi} \oplus W_1)|_{\clq},
\,\, T_2 \cong P_{\clq}(M_{\Psi} \oplus W_2)|_{\clq},
\,\, T \cong P_{\clq}(M_z \oplus W)|_{\clq}.
\]  
Furthermore, if $T$ is a pure $m$-hypercontraction
then the Hilbert space $\clr=\{0\}$.


This in turn provides a similar factorization result for subnormal operators.
The above factorization result is obtained by
finding a suitable and explicit dilation of commuting 
contractive operator triples, of the form $(T_1,T_2, T_1T_2)$
for $(T_1,T_2)\in\clf_m(\clh)$,
 on some weighted Bergman space.
At the same time, the explicit dilation of triples relies on a
Douglus type dilation of $m$-hypercontractions
and a commutant lifting technique originally found in ~\cite{DSS}. 

The plan of the paper is as follows.
Section ~\ref{Douglas-Hyper} contains Douglus
type dilation for $m$-hypercontractions.
We study different properties of $\clf_m(\clh)$ in 
Section~\ref{fmh}.
In Section ~\ref{Product-Hyper}, we find a suitable explicit 
dilation for the class of factors in $\clf_m(\clh)$. 
This is then used to obtain several factorization 
results in Section~\ref{Factorization}.
In the last section, we find examples 
of factors of $m$-hypercontractions on $\clh$ which 
are not an element of $\clf_m(\clh)$.

\newsection{Douglas Type Dilation for Hypercontractions}
\label{Douglas-Hyper}

In this section, we find a Douglas type dilation and therefore
the model for $m$-hypercontractions as in Theorem~\ref{Agler},
 which is required to obtain dilation of factors of hypercontractions.
Our explicit construction of
 Douglas type dilation for $m$-hypercontractions seems to be new.
 We believe that this may be known to experts in the area.
But, we include the construction of such explicit dilation
for completeness.

Recall that a contraction $T$ on $\clh$ is a $m$-hypercontraction
if for all $n=1,\dots, m$,
\[K_n^{-1}(T,T^*)= \sum_{k=0}^n (-1)^k \left(
    \begin{array}{c}
      n \\
      k
    \end{array} \right)T^kT^{*k} \geq 0.\]
Also for all $n=1,\dots,m$, $n$-th order defect operator and defect space are 
\[
 D_{n,T}=K_n^{-1}(T,T^*)^{1/2}\ \text{and }
 \cld_{n,T}=\overline{\text{ran}}D_{n,T},
\]
respectively. The sequence of weights $\{w_{n,k}\}_{k= 0}^{\infty}$
 given by 
\[
 (1-x)^{-n} = \sum_{k=0}^{\infty} w_{n,k} x^k, \hspace{1cm}
  (|x| < 1, n\in\N\cup\{0\})
\]
 play a crucial role in what follows and we invoke a lemma from 
~\cite{JA2} which describe certain relationship of these weights for different 
values of $n$. 

\begin{Lemma}[cf \cite{JA2}]
\label{weight seq}
Let $\{w_{n,k}\}_{k\ge 0, n\ge 0}$ be as above.
Then for all $n,k \geq 1 $,
\[w_{n,k} - w_{n,k-1} = w_{n-1,k}.\]
\end{Lemma}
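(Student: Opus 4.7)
The plan is to read the identity off directly from the multiplicative relation between the generating functions that define the weights. The key observation is that
\[
(1-x)^{-(n-1)} = (1-x)\cdot (1-x)^{-n},
\]
so the defining power series of the two sides must agree coefficient by coefficient, and this comparison will immediately produce the claimed recursion.

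Concretely, I would expand the right-hand side as
\[
(1-x)\sum_{k\ge 0} w_{n,k}\, x^k = \sum_{k\ge 0} w_{n,k}\, x^k - \sum_{k\ge 0} w_{n,k}\, x^{k+1},
\]
and reindex the second sum to obtain
\[
\sum_{k\ge 0} w_{n,k}\, x^k - \sum_{k\ge 1} w_{n,k-1}\, x^k = w_{n,0} + \sum_{k\ge 1}\bigl(w_{n,k}-w_{n,k-1}\bigr)x^k.
\]
Comparing this with the left-hand side $\sum_{k\ge 0} w_{n-1,k}\, x^k$ and invoking uniqueness of Taylor coefficients, I read off $w_{n-1,k} = w_{n,k} - w_{n,k-1}$ for every $k\ge 1$, which is precisely the claim.

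A second, purely combinatorial route is also available: the generalized binomial theorem gives the closed form $w_{n,k} = \binom{n+k-1}{k}$, and Pascal's rule then yields
\[
\binom{n+k-1}{k} - \binom{n+k-2}{k-1} = \binom{n+k-2}{k} = \binom{(n-1)+k-1}{k} = w_{n-1,k}.
\]
I would prefer the generating function argument because it avoids invoking any explicit formula and keeps the proof essentially one line. There is no serious obstacle here; the only point requiring a touch of care is the reindexing in the second sum, which forces the restriction $k\ge 1$ so that $w_{n,k-1}$ is defined, matching the hypothesis of the lemma.
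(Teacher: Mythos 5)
Your argument is correct: comparing coefficients in $(1-x)^{-(n-1)}=(1-x)\,(1-x)^{-n}$ gives exactly $w_{n-1,k}=w_{n,k}-w_{n,k-1}$ for $k\ge 1$, and your alternative check via $w_{n,k}=\binom{n+k-1}{k}$ and Pascal's rule is also sound. The paper itself offers no proof of this lemma, citing it from Agler's work, and your generating-function derivation is precisely the standard one-line argument behind that citation, so there is nothing further to reconcile.
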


For a fixed $1\le n\le m$, consider the orthonormal basis
$\{\psi_{n,k}(z)= \sqrt{w_{n,k}}z^k\}_{k=0}^{\infty}$
for the weighted Bergman space $A^2_n$.
Then the kernel function of $A^2_n$ is given by 
\[ K_n(z,w)= (1-z\bar{w})^{-n} =
\sum_{k=0}^{\infty} \overline{\psi_{n,k}(w)} \psi_{n,k}(z)\quad (z,w\in\D). \] 
We set, for $r\geq0$,
\[ f_r^{(n)}(z,w): = \sum_{k=r}^{\infty} \psi_{n,k}(z)K_n^{-1}(z,w)\overline{\psi_{n,k}(w)}
\quad  (z,w\in\D). \]
Then it can be easily seen that $f_0^{(n)}\equiv 1$ and
\[   f_r^{(n)}(z,w) = 1 - \sum_{k=0}^{r-1} \psi_{n,k}(z)K_n^{-1}(z,w)\overline{\psi_{n,k}(w)},\  (r\ge 1)\]
and consequently, $f_r^{(n)}$ is a polynomial for all $r\ge 0$.
As a result, using  polynomial calculus,
we define \[f_r^{(n)}(T,T^*) := 1 - \sum_{k=0}^{r-1} w_{n,k} T^k K_n^{-1}(T,T^*)T^{*k},\quad (r\ge 0, 1\le n\le m)\]
for any $m$-hypercontraction $T$ on $\clh$.
These operator are used to study the canonical dilation map 
 $\Pi_{m,T}: \clh\to A^2_{m}(\cld_T)$ defined by 
\begin{align}\label{v_{m,T}}
(\Pi_{m,T}h)(z) = D_{m,T}(I_{\clh} - zT^*)^{-m}h,
 \hspace{1cm} (h\in \clh, z\in \D)
\end{align}
corresponding to an $m$-hypercontraction $T$ on $\clh$.
The next proposition shows that the operator $\Pi_{m,T}$
 is a contraction and it is 
analogous to Proposition 10 in ~\cite{AEM} 
for the case when $T$ is a pure $m$-hypercontraction.

\begin{Proposition}\label{norm identity}
In the above setting, we have the following:
\begin{enumerate}
\item [\textup{(i)}]
For any $1\le n\le m$, the sequence 
$\{f_r^{(n)}(T,T^*)\}_{r=0}^{\infty}$
is a decreasing sequence of positive operators. 
\item[\textup{(ii)}] 
$ \|\Pi_{m,T}h \|^2 = \|h\|^2 - \lim_{r \to \infty} \langle f_r^{(m)}(T,T^*)h,h \rangle\quad  (h\in \clh)$. 
\end{enumerate}
\end{Proposition}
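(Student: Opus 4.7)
The plan is to reduce everything to an explicit closed form for $f_r^{(n)}(z,w)$ that exhibits positivity directly once $T$ is substituted. Starting from $f_r^{(n)}(z,w)=(1-z\bar w)^n\sum_{k\ge r}w_{n,k}(z\bar w)^k$, I would apply the weight identity $w_{n,k}=w_{n,k-1}+w_{n-1,k}$ of Lemma \ref{weight seq} inside the sum and reindex to obtain the scalar recursion
\[
f_r^{(n)}(z,w) = z\bar w\,f_{r-1}^{(n)}(z,w) + (1-z\bar w)\,f_r^{(n-1)}(z,w)\quad (r,n\ge 1),
\]
with easy base cases $f_0^{(n)}\equiv 1$ and, by summing a geometric series, $f_r^{(1)}(z,w)=(z\bar w)^r$.

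A short induction (whose inductive step is Pascal's identity $\binom{r+k-2}{k}+\binom{r+k-2}{k-1}=\binom{r+k-1}{k}$) then upgrades the recursion to the closed form
\[
f_r^{(n)}(z,w) = (z\bar w)^r\sum_{k=0}^{n-1}\binom{r+k-1}{k}(1-z\bar w)^k \quad (r\ge 1,\ n\ge 1).
\]
Since every monomial already has all $z$'s to the left of all $\bar w$'s, the hereditary functional calculus translates this into
\[
f_r^{(n)}(T,T^*) = T^r\Big[\sum_{k=0}^{n-1}\binom{r+k-1}{k}K_k^{-1}(T,T^*)\Big]T^{*r}.
\]
Because $T$ is an $m$-hypercontraction and $n\le m$, the intermediate positivity result quoted from \cite{MV} yields $K_k^{-1}(T,T^*)\ge 0$ for every $0\le k\le m$; the binomial coefficients are non-negative, so $f_r^{(n)}(T,T^*)\ge 0$. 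The monotonicity half of (i) is then immediate from the telescoping identity $f_r^{(n)}(z,w)-f_{r+1}^{(n)}(z,w)=\psi_{n,r}(z)K_n^{-1}(z,w)\overline{\psi_{n,r}(w)}$ built into the definition of $f_r^{(n)}$, which becomes $w_{n,r}T^r K_n^{-1}(T,T^*)T^{*r}\ge 0$ under the calculus.

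For (ii), I would expand $(\Pi_{m,T}h)(z)=D_{m,T}(I-zT^*)^{-m}h=\sum_{k\ge 0}w_{m,k}\,z^k\,D_{m,T}T^{*k}h$ as a Taylor series and use $\|z^k\|_m^2=w_{m,k}^{-1}$ in $A_m^2(\cld_{m,T})$ to get, at the partial-sum level,
\[
\sum_{k=0}^{r-1}w_{m,k}\,\langle T^k K_m^{-1}(T,T^*)T^{*k}h,h\rangle = \|h\|^2 - \langle f_r^{(m)}(T,T^*)h,h\rangle,
\]
which is a direct unfolding of the defining formula for $f_r^{(m)}(T,T^*)$. Part (i) forces the right-hand side to be monotone increasing in $r$ and bounded above by $\|h\|^2$, so the limit exists, simultaneously placing $\Pi_{m,T}h$ in $A_m^2(\cld_{m,T})$ and yielding the stated identity.

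The main obstacle is discovering Step 2: one must guess, or reconstruct from the recursion, the closed form, after which positivity is transparent. An alternative that avoids the explicit formula is a double induction on $(r,n)$ using the operator version of the recursion together with the hereditary identity $K_k^{-1}(T,T^*)-TK_k^{-1}(T,T^*)T^*=K_{k+1}^{-1}(T,T^*)$, but this is slightly messier and obscures the structural point that $f_r^{(n)}(T,T^*)$ factors as $T^r(\cdot)T^{*r}$ with a manifestly positive middle factor.
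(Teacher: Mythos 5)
Your argument is correct, and in substance it is the paper's argument in telescoped form rather than a genuinely different one. The paper proves the single-step operator identity $f_r^{(n)}(T,T^*)=f_r^{(n-1)}(T,T^*)+w_{n,r-1}\,T^rK_{n-1}^{-1}(T,T^*)T^{*r}$ by Abel summation with the weight lemma $w_{n,k}-w_{n,k-1}=w_{n-1,k}$ and then inducts on $n$ down to $f_r^{(1)}(T,T^*)=T^rT^{*r}\ge 0$; iterating that identity gives exactly your closed form, since $\binom{r+k-1}{k}=w_{k+1,r-1}$, so $f_r^{(n)}(T,T^*)=T^r\bigl[\sum_{k=0}^{n-1}w_{k+1,r-1}K_k^{-1}(T,T^*)\bigr]T^{*r}$. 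What your route changes is the level at which the bookkeeping is done: you run the recursion (again powered by the same weight lemma, now via Pascal's identity) on the scalar symbols and only then apply the hereditary calculus, which makes the factorization $T^r(\cdot)T^{*r}$ and hence positivity visible at a glance; the one point you should make explicit is that the calculus $z^i\bar w^j\mapsto T^iT^{*j}$ is well defined on polynomials independently of how they are written, so a scalar polynomial identity may legitimately be transferred to operators. Both proofs use the same external ingredient, the intermediate positivity $K_k^{-1}(T,T^*)\ge 0$ for $k\le m$ from \cite{MV}, for monotonicity as well as positivity. For part (ii), the paper simply cites Proposition 10 of \cite{AEM} ``verbatim,'' and your Taylor-coefficient computation with monotone partial sums is precisely that argument, so nothing is missing there either.
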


\begin{proof}
It is clear from the definition that $\{f^{(n)}_r(T,T^*)\}_{r=0}^{\infty}$
is a decreasing sequence for all $n=1,\dots,m$.
For the positivity,
it follows from Lemma~\ref{weight seq} and the discussion 
succeeding it that
for $r\ge 0$ and $1\le n\le m$, 
\begin{align}
\label{fnr}
 f_r^{(n)}(T,T^*)
&=  1 - \sum_{k=0}^{r-1} w_{n,k} T^k K_n^{-1}(T,T^*)T^{*k}\nonumber\\
&= 1 - \sum_{k=0}^{r-1} w_{n,k} T^k \Big( K_{n-1}^{-1}(T,T^*)- TK_{n-1}^{-1}(T,T^*)T^* \Big) T^{*k}\nonumber\\
&= 1 - w_{n,0}K_{n-1}^{-1}(T,T^*) - \sum_{k=1}^{r-1} (w_{n,k} - w_{n,k-1})T^k K_{n-1}^{-1}(T,T^*)T^{*k}\nonumber\\
& \hspace{ 2in}+ w_{n,r-1}T^r K_{n-1}^{-1}(T,T^*)T^{*r} \nonumber\\
&= f_r^{(n-1)}(T,T^*) + w_{n,r-1}T^r K_{n-1}^{-1}(T,T^*)T^{*r}. 
\end{align}
Since $w_{n,r-1}T^r K_{n-1}^{-1}(T,T^*)T^{*r}\ge 0$, we conclude
that $f_r^{(n)}(T,T^*) \geq f_r^{(n-1)}(T,T^*)$ for all $r\ge 0$
and for all $n=1,\dots,m$.
As a result, we also have 
\[ f_r^{(n)}(T,T^*) \geq f_r^{(n-1)}(T,T^*) \geq \cdots \geq f_r^{(1)}(T,T^*)=T^rT^{*r} \geq 0.\] 
This proves that $\{f_r^{(n)}(T,T^*)\}_{r=0}^{\infty}$ is a
decreasing sequence of positive operators. 
The proof of $\textup{(ii)}$ is verbatim with the proof of Proposition 10 in \cite{AEM}.
\end{proof}

By the above result, 
we denote the strong operator limit
of the sequence $\{f^{(n)}_r(T,T^*)\}_{r=0}^{\infty}$ and its range as
\begin{equation} 
\label{Q}
Q_{n,T}^2 := \textup{SOT}-\lim_{r\to \infty} f_r^{(n)}(T,T^*) \quad
\clq_{n,T}= \overline{ran}Q_{n,T}\quad (1\le n\le m).
\end{equation}

It should be noted that if $T$ is a pure $m$-hypercontraction then 
\[
 \textup{SOT}-\lim_{r\to \infty}f_r^{(m)}(T,T^*) =
 \textup{SOT}-\lim_{r\to \infty}f_r^{(m-1)}(T,T^*) = 
 \cdots = 
 \textup{SOT}-\lim_{r\to \infty}f_r^{(1)}(T,T^*)= 0.
\]
This can derived from the identity ~\eqref{fnr} 
 and from the fact that
$w_{n,r-1}T^r K_{n-1}^{-1}(T,T^*)T^{*r}\to 0$ in the strong operator 
topology (see Lemma 2.11 in ~\cite{JA2}).
Thus the canonical dilation map $\Pi_{m,T}$ is an 
isometry if and only if $T$ is a pure $m$-hypercontraction. The intertwining 
property of $\Pi_{m,T}$, that is $\Pi_{m,T}T^*= M_z^*\Pi_{m,T}$
where $M_z$ is the 
shift on $A^2_m(\cld_{m,T})$, is evident from the definition of $\Pi_{m,T}$.

Before we present the main theorem of this section, we recall a
well-known factorization result due to Douglas.

\begin{Lemma}{\textup{(cf. \cite{Douglas})}} 
\label{Douglas-lemma} 
Let $A$ and $B$ be two bounded linear operators on a Hilbert space $\clh$. Then there exists a contraction $C$ on $\clh$ such that $A=BC$ if and only if \[ AA^* \leq BB^*.\]
\end{Lemma}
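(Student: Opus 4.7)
The plan is to prove the two implications separately, with the forward direction being immediate and the reverse relying on constructing $C$ from the hypothesis via the well-known Douglas trick of defining an operator on the range of $B^*$ and extending.

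For the easy direction, if $A=BC$ for some contraction $C$, then $AA^* = BCC^*B^* \le BB^*$ since $CC^*\le I_{\clh}$. Taking adjoints inside the inequality uses nothing beyond the order-preservation of conjugation by $B$.

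For the reverse direction, I would start from the pointwise reformulation $\|A^*h\|^2 = \la AA^*h,h\ra \le \la BB^*h,h\ra = \|B^*h\|^2$ for all $h\in\clh$. This estimate immediately does two things at once: it shows that the map $D_0 \colon \text{ran}\,B^* \to \text{ran}\,A^*$ given by $D_0(B^*h) := A^*h$ is well-defined (if $B^*h=0$ then $\|A^*h\|\le \|B^*h\|=0$, so $A^*h=0$), and it shows $\|D_0\|\le 1$. I would then extend $D_0$ by continuity to the closure $\overline{\text{ran}\,B^*}$ and define it to be $0$ on the orthogonal complement $(\text{ran}\,B^*)^{\perp} = \ker B$, yielding a contraction $D\in\clb(\clh)$ satisfying $DB^* = A^*$. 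Setting $C := D^*$ gives $BC = BD^* = (DB^*)^* = A$, and $\|C\| = \|D\|\le 1$.

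The only subtlety, which I would be careful to address explicitly, is that the extension of $D_0$ must respect the decomposition $\clh = \overline{\text{ran}\,B^*}\oplus \ker B$; this guarantees $D$ is globally a contraction rather than only a contraction on the initial subspace. Beyond this bookkeeping there is no real obstacle, since the inequality $AA^*\le BB^*$ was precisely designed to encode the contractivity constraint on $C$ in the Douglas factorization.
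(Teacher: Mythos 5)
Your proof is correct, and it is precisely the classical argument of Douglas (the well-definedness and contractivity of $D_0$ on $\mbox{ran}\,B^*$ from the inequality $\|A^*h\|\le\|B^*h\|$, continuous extension to $\overline{\mbox{ran}}\,B^*$, zero on $\ker B$, and adjoint). The paper does not prove this lemma at all; it simply cites Douglas's original paper, and your argument is essentially the proof given there, so there is nothing to reconcile.
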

The explicit construction of Douglas type dilation for $m$-hypercontractions is given next.

\begin{Theorem}\label{Dilation}
If $T \in \clb(\clh)$ is an $m$-hypercontraction, then there exist
a Hilbert space $\clr$, an isometry $\Pi_T:\clh\to A^2_m(\cld_{m,T})\oplus\clr$ and a unitary $W$ on $\clr$
such that 
\[
\Pi_TT^*=(M_z^*\oplus W^*)\Pi_T.
\]
In particular, 
\[ T \cong P_{\clq}(M_z \oplus W)|_{\clq},\]
 where $\clq=\mbox{ran} \Pi_T$ is the $(M_z\oplus W)^*$-invariant subspace of
$A^2(\cld_{m,T}) \oplus \clr$.

\end{Theorem}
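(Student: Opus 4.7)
The plan is to complete the canonical contractive map $\Pi_{m,T}$ of~\eqref{v_{m,T}} with a unitary-type tail that absorbs the defect $\|h\|^2-\|\Pi_{m,T}h\|^2=\|Q_{m,T}h\|^2$ furnished by Proposition~\ref{norm identity}\textup{(ii)}. Concretely, I would set
\[
\Pi_T h \;=\; (\Pi_{m,T}h)\,\oplus\,(Q_{m,T}h)\in A^2_m(\cld_{m,T})\oplus\clr,
\]
where $\clr$ is a Hilbert space containing $\clq_{m,T}$ and carrying a unitary $W$ with $W^*Q_{m,T}=Q_{m,T}T^*$. The intertwining $\Pi_{m,T}T^*=M_z^*\Pi_{m,T}$ on the first summand is a routine Taylor-coefficient comparison using $(\Pi_{m,T}h)(z)=\sum_{k\ge 0}w_{m,k}z^k D_{m,T}T^{*k}h$, so the real content is the construction of $W$.

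The key step is the operator identity $TQ_{m,T}^2T^*=Q_{m,T}^2$, which forces the prescription $V^*\colon Q_{m,T}h\mapsto Q_{m,T}T^*h$ to be well defined and isometric on $\clq_{m,T}$. I would prove it by computing $f_r^{(m)}(T,T^*)-Tf_r^{(m)}(T,T^*)T^*$ and taking SOT limits. Combining the recursion~\eqref{fnr} with the elementary identity $K_n^{-1}(T,T^*)=K_{n-1}^{-1}(T,T^*)-T K_{n-1}^{-1}(T,T^*)T^*$ (a consequence of Pascal's rule), induction on $n$ gives
\[
f_r^{(m)}(T,T^*)-Tf_r^{(m)}(T,T^*)T^*=\bigl(T^rT^{*r}-T^{r+1}T^{*(r+1)}\bigr)+\sum_{n=2}^{m}w_{n,r-1}T^r K_n^{-1}(T,T^*)T^{*r}.
\]
The first parenthesis tends to $0$ in SOT because $\{T^rT^{*r}\}$ is a decreasing positive sequence, hence SOT-convergent, so consecutive members share a limit. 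Rewriting each $K_n^{-1}$ in the sum as $K_{n-1}^{-1}-TK_{n-1}^{-1}T^*$ and using $w_{n,r-1}/w_{n,r}\to1$, every remaining summand reduces to expressions of the form $w_{n,s-1}T^s K_{n-1}^{-1}(T,T^*)T^{*s}$, which vanish in SOT by Lemma~2.11 of~\cite{JA2}.

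Once $V^*$ is an isometry on $\clq_{m,T}$, I would extend it to a unitary $W^*$ on some $\clr\supseteq\clq_{m,T}$ via the standard unitary extension of an isometry. The isometry of $\Pi_T$ is then immediate from Proposition~\ref{norm identity}\textup{(ii)}, and the intertwining $\Pi_T T^*=(M_z^*\oplus W^*)\Pi_T$ is verified componentwise. Setting $\clq:=\mathrm{ran}\,\Pi_T$, its $(M_z\oplus W)^*$-invariance follows from the intertwining, and identifying $\clh$ with $\clq$ via the isometry $\Pi_T$ gives $T\cong P_{\clq}(M_z\oplus W)|_{\clq}$. The hardest part is the identity $TQ_{m,T}^2T^*=Q_{m,T}^2$: conceptually clean, but requiring one to juggle the telescoping in $r$, the shift between $K_n^{-1}$ and $K_{n-1}^{-1}$, and the weight ratios $w_{n,r-1}/w_{n,r}\to 1$ so that Agler's Lemma~2.11 applies to each piece.
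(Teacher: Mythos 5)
Your proposal is correct, and its overall architecture is exactly that of the paper: the same augmented map $\Pi_T h=(\Pi_{m,T}h,\,Q_{m,T}h)$, the same pivotal identity $TQ_{m,T}^2T^*=Q_{m,T}^2$ producing an isometry $Q_{m,T}h\mapsto Q_{m,T}T^*h$ on $\clq_{m,T}$, its unitary extension $W$ on $\clr\supseteq\clq_{m,T}$, and isometry of $\Pi_T$ via Proposition~\ref{norm identity}\textup{(ii)}. The only place you diverge is in how the key identity is proved. The paper runs an induction on $n$ showing $TQ_{n,T}^2T^*=Q_{n,T}^2$ for every intermediate level, based on the index-shifted algebraic relation $f_{r+1}^{(n+1)}(T,T^*)-Tf_r^{(n+1)}(T,T^*)T^*=f_{r+1}^{(n)}(T,T^*)-Tf_{r+1}^{(n)}(T,T^*)T^*$; this needs no asymptotic input beyond SOT-convergence of the decreasing sequence $T^rT^{*r}$. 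You instead iterate the recursion into the closed form $f_r^{(m)}-Tf_r^{(m)}T^*=(T^rT^{*r}-T^{r+1}T^{*(r+1)})+\sum_{n=2}^m w_{n,r-1}T^rK_n^{-1}(T,T^*)T^{*r}$, which I checked is correct (it is the paper's recursion with the boundary term retained rather than shifted away), and then you must kill each boundary term in SOT. That vanishing is legitimate: it follows from the fact $w_{n,r-1}T^rK_{n-1}^{-1}(T,T^*)T^{*r}\to 0$ that the paper itself invokes (Lemma 2.11 of \cite{JA2}) right after \eqref{Q}, since $K_n^{-1}(T,T^*)\le K_{n-1}^{-1}(T,T^*)$; even more cheaply, the partial sums $\sum_{k=0}^{r-1}w_{n,k}T^kK_n^{-1}(T,T^*)T^{*k}=I-f_r^{(n)}(T,T^*)\le I$ are bounded, so the (positive, norm-bounded) terms tend to $0$ in SOT without any weight-ratio juggling. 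So your route buys an explicit formula for $f_r^{(m)}-Tf_r^{(m)}T^*$ at the cost of an extra asymptotic lemma, while the paper's index shift keeps the argument purely algebraic; both are sound, and the remaining steps (Douglas lemma versus your direct isometry computation, unitary extension, componentwise intertwining, and $T\cong P_{\clq}(M_z\oplus W)|_{\clq}$ on $\clq=\mbox{ran}\,\Pi_T$) coincide with the paper's.
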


\begin{proof}
Let $Q_{n,T}$ be the positive operator as in ~\eqref{Q}
 for all $1\le n\le m$. 
By induction on $n$, we prove that 
\[  TQ_{n,T}^2T^* = Q_{n,T}^2\ (n=1,\dots,m) . \]
It is easy to see that it holds for $n=1$. 
Then we assume that the identity holds for some $n$ with $1\le n <m$. 
Thus by the assumption
$f_{r+1}^{(n)}(T,T^*) - Tf_{r+1}^{(n)}(T,T^*)T^* \to 0$
in the strong operator topology as $r\to\infty$.
Now,
\begin{align*}
& f_{r+1}^{(n+1)}(T,T^*) - Tf_r^{(n+1)}(T,T^*)T^* \\
&= I - TT^* - K_{n+1}^{-1}(T,T^*) + \sum_{k=0}^{r-1} (w_{n+1,k} - w_{n+1,k+1})T^{k+1} K_{n+1}^{-1}(T,T^*)T^{*(k+1)} \\
&=  I - TT^* - K_{n+1}^{-1}(T,T^*) - \sum_{k=0}^{r-1} w_{n,k+1} T^{k+1}K_{n+1}^{-1}(T,T^*)T^{*(k+1)} \\
&=  I - TT^* - (K_{n}^{-1}(T,T^*) - TK_n^{-1}(T,T^*)T^*)\\
& \hspace{2in}- \sum_{k=0}^{r-1} w_{n,k+1} T^{k+1}\big(K_{n}^{-1}(T,T^*) - 
TK_{n}^{-1}(T,T^*)T^*\big)T^{*(k+1)} \\
& = \big(I - \sum_{k=0}^r w_{n,k} T^kK_{n}^{-1}(T,T^*)T^{*k} \big) 
- \big( TT^* - \sum_{k=0}^r w_{n,k} T^{k+1}K_{n}^{-1}(T,T^*)T^{*(k+1)} \big) \\
& = f_{r+1}^{(n)}(T,T^*) - Tf_{r+1}^{(n)}(T,T^*)T^*.
\end{align*}
Consequently by the induction hypothesis, 
$f_{r+1}^{(n+1)}(T,T^*) - Tf_r^{(n+1)}(T,T^*)T^*\to 0$
in the strong operator topology as $r\to \infty$.
 This in turn implies that
\[
  TQ_{n+1,T}^2T^* = Q_{n+1,T}^2.
\]
Thus we have proved that $TQ_{n,T}^2T^* = Q_{n,T}^2$
for all $n=1,\dots,m$. 
In particular since $ TQ_{m,T}^2T^* = Q_{m,T}^2$, 
 by Lemma~\ref{Douglas-lemma}, 
there exists an isometry $X^*$ on $\clq_{m,T}$
such that 
\begin{align}
\label{X*}
 X^*Q_{m,T} = Q_{m,T}T^*.
\end{align}
Let $W^*$ on $\clr \supseteq \clq^{(m)}$
be the minimal unitary extension of $X^*$.
Then, by Proposition~\ref{norm identity},
the map $\Pi_T: \clh \to A^2_m(\cld_{m,T}) \oplus \clr$ defined by
\[ \Pi_Th= (\Pi_{m,T}h, Q_{m,T}h),\quad (h\in\clh)\]
is an isometry and it also satisfies 
\[
 \Pi_TT^* = (M_z \oplus W)^*\Pi_T.
\]
Here the intertwining property follows from ~\eqref{X*}.
Therefore, $\clq=\text{ran}\Pi_T$ is a
 $(M_z\oplus W)^*$-invariant subspace of
  $A^2_m(\cld_{m,T})\oplus\clr$
 and we have
\[ T \cong P_{\clq}(M_z \oplus W)|_{\clq} .\] 
This completes the proof.
\end{proof}

\section{The class $\clf_m(\clh)$}
\label{fmh}

The class of contractive factors $\clf_m(\clh)$ and its basic 
properties are studied in this section.
To begin with we recall the definition of the class  $\clf_m(\clh)$. 
A commuting pair of contractions $(T_1,T_2)$ on $\clh$ is an
 element of $\clf_m(\clh)$ if $K_{m-1}^{-1}(T,T^*)-T_iK_{m-1}^{-1}(T,T^*)T_i^*\ge 0$ for all $i=1,2$ where $T=T_1T_2$. 
 
 For $(T_1,T_2)\in\clf_m(\clh)$ with 
 $T=T_1T_2$, we fix 
 the following notations for the rest of the article:
 \begin{align}
 \label{dmtt}
  D_{n,T,T_i}^2=K_{n-1}^{-1}(T,T^*)-T_iK_{n-1}^{-1}(T,T^*)T_i^*
  \text{ and } \cld_{n,T,T_i}=\overline{\text{ran}}D_{n,T,T_i}^2
  \quad (n\in\N, i=1,2).
 \end{align}
 With the above notation, we have the following useful identity
 \begin{align}
 \label{dmtt identity}
  &D_{n,T,T_i}^2-TD_{n,T,T_i}^2 T^*\nonumber \\
  &=K_{n-1}^{-1}(T,T^*)-TK_{n-1}^{-1}(T,T^*)T^*-
  T_i\big(K_{n-1}^{-1}(T,T^*)-TK_{n-1}^{-1}(T,T^*)T^*\big)T_i^*\nonumber \\
  &=K_{n}^{-1}(T,T^*)-T_iK_{n}^{-1}(T,T^*)T_i^*\nonumber\\
  &=D_{n+1,T,T_i}^2,
 \end{align}
for all $n\ge 0$.
Next we show an intermediate positivity type result.
\begin{Lemma}
If $(T_1,T_2)\in\clf_m(\clh)$ then $(T_1,T_2)\in \clf_n(\clh)$
for all $1\le n\le m$. 
\end{Lemma}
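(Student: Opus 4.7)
My plan is to identify condition (ii) for $\clf_m(\clh)$ with Szeg\"{o} positivity of a specific commuting tuple and then propagate the positivity to every initial segment by a hereditary argument.

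First, as the paper already observes, condition (ii) is equivalent, for $i=1,2$, to Szeg\"{o} positivity of the commuting $m$-tuple $\mathcal T_i=(T,\dots,T,T_i)$ with $T$ appearing $m-1$ times. I would verify this by expanding $\sum_{F\subset\{1,\dots,m\}}(-1)^{|F|}(\mathcal T_i)_F(\mathcal T_i)_F^*$, splitting the sum according to whether $F$ contains the last index, and using the commutation relations $T_iT^k=T^kT_i$ and $T^{*k}T_i^*=T_i^*T^{*k}$ (the latter being the adjoint of $T_iT=TT_i$) to collapse it into
\begin{align*}
\bs_m^{-1}(\mathcal T_i,\mathcal T_i^*) & = \sum_{k=0}^{m-1}(-1)^k\binom{m-1}{k}T^k(I-T_iT_i^*)T^{*k}\\
& = K_{m-1}^{-1}(T,T^*)-T_iK_{m-1}^{-1}(T,T^*)T_i^* = D_{m,T,T_i}^2.
\end{align*}

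Next, I would invoke the M\"{u}ller--Vasilescu hereditary positivity principle: for a commuting tuple of contractions whose full Szeg\"{o} defect is positive, the Szeg\"{o} defect of every sub-tuple is positive. Applied to $\mathcal T_i$ (which is a commuting tuple of contractions, since $T=T_1T_2$ commutes with $T_i$), every sub-tuple of $\mathcal T_i$ is Szeg\"{o} positive. For each $1\le n\le m$, the length-$n$ sub-tuple $\mathcal T_i^{(n)}=(T,\dots,T,T_i)$ with $n-1$ copies of $T$ then satisfies, by the same expansion with $n$ in place of $m$,
\[
\bs_n^{-1}(\mathcal T_i^{(n)},\mathcal T_i^{(n)*})=D_{n,T,T_i}^2\ge 0,
\]
giving $(T_1,T_2)\in\clf_n(\clh)$ for every $1\le n\le m$.

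The principal obstacle is establishing the hereditary principle in our merely commuting (not doubly commuting) setting, since the paper cites only the single-operator intermediate positivity from \cite{MV}. Exploiting the special structure of $\mathcal T_i$, I would argue by downward induction on $n$, rewriting (\ref{dmtt identity}) as $D_{n,T,T_i}^2=\sum_{k=0}^{r-1}T^kD_{n+1,T,T_i}^2T^{*k}+T^rD_{n,T,T_i}^2T^{*r}$ and showing the remainder $T^rD_{n,T,T_i}^2T^{*r}$ has a non-negative SOT limit via the Wold-type canonical decomposition of the contraction $T$, which is itself an $m$-hypercontraction (a fact that follows from the two hypotheses $D_{m,T,T_1}^2,D_{m,T,T_2}^2\ge 0$ by applying $T_1(\cdot)T_1^*$ to the $i=2$ inequality and combining with the $i=1$ inequality to produce $K_m^{-1}(T,T^*)\ge 0$).
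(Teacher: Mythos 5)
Your reduction of condition (ii) to Szeg\"{o} positivity of $\mathcal T_i=(T,\dots,T,T_i)$ is correct (and so is your parenthetical derivation that $T$ is an $m$-hypercontraction), but the proof then rests on two steps that do not hold up. First, the ``hereditary positivity principle'' you invoke is not a M\"uller--Vasilescu theorem and is false in general: take $S$ the unilateral shift on $H^2(\D)$ and the commuting triple of contractions $(S,S,I)$. Its Szeg\"{o} defect vanishes (the terms with and without the index carrying $I$ cancel pairwise), hence is positive, yet the sub-pair $(S,S)$ has defect $I-2SS^*+S^2S^{*2}=P_{\C 1}-P_{\C z}$, which is not positive. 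So positivity of sub-tuples cannot simply be invoked; for the special tuples $\mathcal T_i^{(n)}$ it is exactly the statement to be proved.

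Second, your fallback argument for the special case has a gap at its decisive point. The telescoping $D_{n,T,T_i}^2=\sum_{k=0}^{r-1}T^kD_{n+1,T,T_i}^2T^{*k}+T^rD_{n,T,T_i}^2T^{*r}$ does reduce everything to showing that the remainder has a nonnegative SOT limit $L$; the limit exists (the remainders form a decreasing, bounded-below sequence of self-adjoint operators), but given the induction hypothesis the assertion $L\ge 0$ is equivalent to $D_{n,T,T_i}^2\ge 0$ itself, so this step carries the entire weight of the lemma, and the tool you propose cannot deliver it: a contraction, even an $m$-hypercontraction, admits no Wold-type splitting into a pure part and a unitary part (the canonical decomposition only splits off the unitary part, and on the c.n.u.\ part $T^{*r}$ need not tend to $0$), and in any case the $T$-reducing subspaces it produces need not reduce $T_i$, so one cannot analyze $L=C_\infty-T_iC_\infty T_i^*$, with $C_\infty=\textup{SOT-}\lim_r T^rK_{n-1}^{-1}(T,T^*)T^{*r}$, blockwise. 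The paper avoids the limit altogether: for fixed $h$ it shows every term $a_r=\langle T^rD_{(m-1),T,T_1}^2T^{*r}h,h\rangle$ is nonnegative, because $\{a_r\}$ is decreasing (the differences are $\langle T^rD_{m,T,T_1}^2T^{*r}h,h\rangle\ge 0$) while its partial sums telescope, via the identity one level \emph{down}, $D_{(m-1),T,T_1}^2=D_{(m-2),T,T_1}^2-TD_{(m-2),T,T_1}^2T^*$, to $\langle (D_{(m-2),T,T_1}^2-T^{N+1}D_{(m-2),T,T_1}^2T^{*(N+1)})h,h\rangle$ and hence stay uniformly bounded; only boundedness, not positivity, of $D_{(m-2),T,T_1}^2$ is used, and a decreasing real sequence with bounded partial sums must be termwise nonnegative, giving $a_0=\langle D_{(m-1),T,T_1}^2h,h\rangle\ge 0$. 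That bounded-partial-sums device (or some genuine substitute controlling the remainder) is the idea missing from your proposal.
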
 
\begin{proof}
 It is enough to show that $D_{n,T,T_i}^2\ge 0$ for all 
 $n=1,\dots,m$ and for all $i=1,2$. We only consider the case 
  $i=1$ as it is symmetrical for $i=2$. By the hypothesis 
 $D_{m,T,T_1}^2\ge 0$ and $D_{1,T,T_1}^2\ge 0$.
 To show $D_{(m-1),T,T_1}^2\ge 0$, we assume $m\ge 2$ and
 consider the sequence $\{a_r\}_{r=0}^{\infty}$,
 corresponding to a fixed $h\in\clh$, defined as 
 \[
  a_r=\langle T^rD_{(m-1),T,T_1}^2T^{* r}h, h\rangle\quad (r\ge 0).
 \]
Then for any $r\ge 0$, using ~\eqref{dmtt identity}, we have
\begin{align*}
 a_r-a_{r+1}
 &=
 \langle T^r(D_{(m-1),T,T_1}^2-TD_{(m-1),T,T_1}^2T^*)T^{* r}h,h\rangle\\
 &=\langle T^r D_{m,T,T_1}^2T^{* r}h, h\rangle \ge 0.
\end{align*}
Thus $\{a_r\}_{r=0}^{\infty}$ is a decreasing sequence. Also since 
\begin{align*}
 \Big|\sum_{r=0}^Na_r\Big|
 &=\Big|\Big\langle\sum_{r=0}^N T^r(D_{(m-2),T,T_1}^2-TD_{(m-2),T,T_1}^2T^*)
 T^{* r}h,h\Big\rangle\Big|\\
 &=\Big|\Big\langle (D_{(m-2),T,T_1}^2- T^{N+1}D_{(m-2),T,T_1}^2T^{* (N+1)})h,
 h\Big\rangle\Big|\\
 & \le 2\|h\|^2\|D_{(m-2),T,T_1}^2\|,
\end{align*}
$a_r\ge 0$ for all $r\ge 0$. In particular, it implies that 
$ D_{(m-1),T,T_1}^2\ge 0$.  Therefore, by induction on $m$,
we have all the required positivity. This completes the proof.
\end{proof}


Needless to say that the product of two commuting contractions is not 
an $m$-hypercontraction, in general. 
We find a sufficient condition for product of two commuting 
contractions to be an $m$-hypercontraction. 
The sufficient condition is simply that the pair of contractions on $\clh$ 
should be an element of $\clf_m(\clh)$.  
This is proved in the next lemma, which is in the same spirit 
as Lemma 3.1 in  ~\cite{BDHS}.
\begin{Lemma}
\label{sufficient}
 If $(T_1,T_2)\in \clf_m(\clh)$, then 
 $T_1T_2$ is a $m$-hypercontraction.
\end{Lemma}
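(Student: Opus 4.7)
The plan is to establish the positivity $K_n^{-1}(T,T^*) \ge 0$ for each $n = 1, \dots, m$, where $T = T_1T_2$, by deriving the explicit factorization
\[
K_n^{-1}(T,T^*) = D_{n,T,T_1}^2 + T_1 D_{n,T,T_2}^2 T_1^*.
\]
Once this is in hand, both summands on the right are positive operators by the preceding intermediate positivity lemma (applied to $(T_1,T_2) \in \clf_n(\clh)$), so the identity will immediately yield the conclusion.

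To obtain the factorization, I would first record the standard recursion
\[
K_n^{-1}(T,T^*) = K_{n-1}^{-1}(T,T^*) - T K_{n-1}^{-1}(T,T^*) T^*,
\]
which follows from the defining alternating binomial sum via a shift of index together with Pascal's rule $\binom{n}{k}=\binom{n-1}{k}+\binom{n-1}{k-1}$. Then, using $T = T_1T_2$ together with the commutativity of $T_1$ and $T_2$, I would rewrite
\[
T K_{n-1}^{-1}(T,T^*) T^* = T_1 \big( T_2 K_{n-1}^{-1}(T,T^*) T_2^* \big) T_1^*,
\]
and apply the defining identity $T_2K_{n-1}^{-1}(T,T^*)T_2^* = K_{n-1}^{-1}(T,T^*) - D_{n,T,T_2}^2$, followed by $T_1 K_{n-1}^{-1}(T,T^*) T_1^* = K_{n-1}^{-1}(T,T^*) - D_{n,T,T_1}^2$. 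Substituting the resulting expression for the middle term back into the recursion for $K_n^{-1}(T,T^*)$ yields the desired formula.

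The only point requiring attention is that applying the factorization for intermediate $n$ requires $D_{n,T,T_i}^2 \ge 0$ for every $1 \le n \le m$ and $i=1,2$, and not merely for the boundary cases $n=m$ and $n=1$ directly hypothesized in the definition of $\clf_m(\clh)$. This is precisely the content of the intermediate positivity lemma proved just above, so no additional work is required. There is no substantive obstacle here; the argument is essentially an algebraic manipulation combined with that lemma.
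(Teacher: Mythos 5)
Your argument is correct and rests on the same key identity as the paper's proof, namely $K_n^{-1}(T,T^*)=\big(K_{n-1}^{-1}(T,T^*)-T_1K_{n-1}^{-1}(T,T^*)T_1^*\big)+T_1\big(K_{n-1}^{-1}(T,T^*)-T_2K_{n-1}^{-1}(T,T^*)T_2^*\big)T_1^*$, i.e.\ $K_n^{-1}(T,T^*)=D_{n,T,T_1}^2+T_1D_{n,T,T_2}^2T_1^*$. The only (harmless) difference is that the paper applies this identity just at $n=m$, where positivity of both summands is exactly condition (ii) defining $\clf_m(\clh)$ (the case $n=1$ being the trivial contraction case, and these two levels suffice for $m$-hypercontractivity), whereas you run it at every level $1\le n\le m$ and accordingly also invoke the intermediate positivity lemma proved just before.
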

\begin{proof}
 Let $T=T_1T_2$. The proof is obvious for $m=1$. For $m\ge 2$
 note that 
 \begin{align*}
 &K_m^{-1}(T,T^*)\\
 &= K_{m-1}(T,T^*)-TK_{m-1}^{-1}(T,T^*)T^*\\
 &=\big(K_{m-1}^{-1}(T,T^*)-T_1^*K_{m-1}^{-1}(T,T^*)T_1^*\big)
 + T_1\big(K_{m-1}^{-1}(T,T^*)-T_2^*K_{m-1}^{-1}(T,T^*)T_2^*\big)T_1^*
 \ge 0.
 \end{align*}
 This completes the proof. 
\end{proof}
The converse of this lemma is not true as we find 
 counterexamples in last section of this article. This suggests that 
 $\clf_m(\clh)$ does not contain all the factors of 
 $m$-hypercontractions. 
Before going further, we consider elementary examples 
of elements in $\clf_m(\clh)$. These examples 
are based on a triple $(\cle, U,P)$ consists of a Hilbert space $\cle$, a unitary operator $U$ on $\cle$ and an orthogonal projection
 $P$ in $\clb(\cle)$. For such a triple, 
the $\clb(\cle)$-valued analytic functions
\[
 \Phi(z)= (P+zP^{\perp})U^*, \text{ and } \Psi(z)=U(P^{\perp}+zP)\quad (z\in\D)
\]
are easily seen to be Schur functions on $\D$,
 that is they are in the unit ball 
of the Banach algebra $H^{\infty}_{\clb(\cle)}(\D)$ consists of bounded
$\clb(\cle)$-valued analytic functions on $\D$.
It is easy to see that 
\[
\Phi(z)\Psi(z)=\Psi(z)\Phi(z)=zI_{\cle}\quad (z\in\D).
\]
We refer to $\Phi, \Psi$ as {\em canonical pair of Schur functions}
 on $\D$ corresponding 
to the triple $(\cle, U,P)$.  
 We claim that the commuting
pair of multiplication operators $(M_{\Phi}, M_{\Psi})$ on $A^2_m(\cle)$
is an element of $\clf_m(A^2_m(\cle))$. Indeed, if $\cle_1=\text{ran}P$
and $\cle_2=\text{ran}P^{\perp}$ then $\cle=\cle_1\oplus \cle_2$.
With respect to the above decomposition of the co-efficient space, we have 
$A^2_m(\cle)= A^2_m(\cle_1)\oplus A^2_m(\cle_2)$
and 
\begin{align*}
 &K_{m-1}(M_z,M_z^*)-M_{\Phi}K_{m-1}(M_z,M_z^*)M_{\Phi}^*\\
 &=K_{m-1}(M_z,M_z^*)-
 \begin{bmatrix}
I_{A^2_m(\cle_1)} & 0\cr 0 & M_z\otimes I_{\cle_2}
\end{bmatrix}(I\otimes U^*)K_{m-1}(M_z,M_z^*)(I\otimes U)\begin{bmatrix}
I_{A^2_m(\cle_1)} & 0\cr 0 & M_z^*\otimes I_{\cle_2}
\end{bmatrix}\\
&=K_{m-1}(M_z,M_z^*)-
 \begin{bmatrix}
I_{A^2_m(\cle_1)} & 0\cr 0 & M_z\otimes I_{\cle_2}
\end{bmatrix}K_{m-1}(M_z,M_z^*)\begin{bmatrix}
I_{A^2_m(\cle_1)} & 0\cr 0 & M_z^*\otimes I_{\cle_2}
\end{bmatrix}\\
& =\begin{bmatrix}
  0& 0 \cr 0 & K_m(M_z\otimes I_{\cle_2},M_z^*\otimes I_{\cle_2})
 \end{bmatrix}\ge 0,
\end{align*}
as $M_z\otimes I_{\cle_2}$ on $A^2_m(\cle_2)$
 is an $m$-hypercontraction. 
Similarly, we have
\[ K_{m-1}(M_z,M_z^*)-M_{\Psi}K_{m-1}(M_z,M_z^*)M_{\Psi}^*\ge 0.\]
This proves the claim. In fact we will see below that 
any pair $(T_1, T_2)\in\clf_m(\clh)$ with $T_1T_2$ is pure 
dilates to such a pair $(M_{\Phi}, M_{\Psi})$ on some $A^2_m(\cle)$,
and therefore they serve as a model for a class of factors
of pure $m$-hypercontractions. 


\section{Dilation of factors}\label{Product-Hyper}

Our main concern is to propose a model for the class  $\clf_m(\clh)$ of factors of $m$-hypercontractions. This is achieved by finding an explicit 
dilation of a triple of commuting contractions $(T_1,T_2,T_1T_2)$ on some
weighted Bergman space, where $(T_1,T_2)\in\clf_m(\clh)$. 
We say an $n$-tuple of commuting contractions
$(T_1,\dots,T_n)$ on $\clh$ dilates to a commuting $n$-tuple 
of operators $(R_1,\dots,R_n)$ on $\clk$ if there is an isometry 
$\Pi:\clh\to\clk$, such that 
\[
 \Pi S_i^*=R_i^*\Pi\quad (i=1,\dots,n).
\]
The map $\Pi$ is often refer as the dilation map.

We prove a lemma which will be the key to the dilation results
 obtained 
in this section. This is analogous to Theorem 2.1 in ~\cite{DSS}. Let $(T_1,T_2)\in\clf_m(\clh)$.  Since $T=T_1T_2$
 is an $m$-hypercontraction, recall the canonical dilation map
 $\Pi_{m,T}:\clh \to A^2_m(\cld_{m,T})$ defined by 
 \[
( \Pi_{m,T}h)(z)=D_{m,T}(I-zT^*)^{-m}h\quad (h\in\clh, z\in\D),
 \]
 such that $\Pi_{m,T}T^*=M_{z}^*\Pi_{m,T}$.  If $V:\cld_{m,T}\to \cle$
 is a isometry for some Hilbert space $\cle$, then the map 
 \[
 \Pi_{V}:=(I\otimes V)\Pi_{m,T}: \clh \to A^2_m(\cle)
 \]
also intertwines with $T^*$ and $M_z^*$ on $A^2_m(\cle)$,
that is $\Pi_V T^*=M_z^*\Pi_V$.
\begin{Lemma}\label{transfer fn}
With the above notation, if $\cld$ is a Hilbert space and if 
\[ U_i= \begin{bmatrix}
        A_i  &   B_i  \cr
        C_i  &   0  \cr
        \end{bmatrix}: \cle \oplus (\cld\oplus \cld_{m,T, T_i})
        \to  \cle \oplus  (\cld\oplus\cld_{m,T,T_i})\quad (i=1,2)  \] 
   is a unitary operator such that for all $h\in\clh$,
\begin{align*} 
& U_i\big(VD_{m,T}h, 0_{\cld}, D_{m,T,T_i}T^*h \big)
= \big(VD_{m,T}T_i^*h, 0_{\cld}, D_{m,T,T_i}h \big), \  (i=1,2)
\end{align*}
then the  $\clb(\cle)$-valued Schur function
$\Phi_i(z)= A_i^* + zC_i^*B_i^*\ (z\in\D) $,
transfer function corresponding to the unitary  $U_i^*$, satisfies  
\[ \Pi_VT_i^* = M_{\Phi_i}^*\Pi_V,\] 
for all $i=1,2$.
\end{Lemma}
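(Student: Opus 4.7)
The plan is to exploit the block structure of the colligation $U_i$ together with the intertwining $\Pi_V T^* = M_z^* \Pi_V$, which follows immediately from $\Pi_V = (I\otimes V)\Pi_{m,T}$ and Proposition~\ref{norm identity}. Unpacking the hypothesis component-wise, the given equation decomposes into
\[ A_i V D_{m,T} h + B_i(0_\cld, D_{m,T,T_i} T^* h) = V D_{m,T} T_i^* h, \qquad C_i V D_{m,T} h = (0_\cld, D_{m,T,T_i} h). \]
Replacing $h$ by $T^*h$ in the second equation gives $C_i V D_{m,T} T^* h = (0_\cld, D_{m,T,T_i} T^* h)$; substituting this into the first equation eliminates the awkward vector on the right and produces the clean identity
\[ A_i V D_{m,T} h + B_i C_i V D_{m,T} T^* h = V D_{m,T} T_i^* h \qquad (h \in \clh). \]

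The next step is to propagate this identity through the resolvent. Since $T_i$ commutes with $T = T_1T_2$, the operator $T_i^*$ commutes with $(I-zT^*)^{-m}$ for $|z|<1$, so the identity above, applied to $(I-zT^*)^{-m} h$ in place of $h$, yields
\[ A_i V D_{m,T}(I - zT^*)^{-m} h + B_i C_i V D_{m,T}(I - zT^*)^{-m} T^* h = V D_{m,T}(I - zT^*)^{-m} T_i^* h. \]
Interpreting each term via the defining formula of $\Pi_V$, this is precisely
\[ A_i (\Pi_V h)(z) + B_i C_i (\Pi_V T^* h)(z) = (\Pi_V T_i^* h)(z). \]

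It remains to recognize the left-hand side as $(M_{\Phi_i}^* \Pi_V h)(z)$. Using the tensor decomposition $A^2_m(\cle) \cong A^2_m \otimes \cle$, the multiplier splits as $M_{\Phi_i} = I \otimes A_i^* + M_z \otimes C_i^* B_i^*$, whose adjoint is $M_{\Phi_i}^* = I \otimes A_i + (M_z^* \otimes I)(I \otimes B_i C_i)$. Applying this to $\Pi_V h$ and using $M_z^*\Pi_V = \Pi_V T^*$ gives $M_{\Phi_i}^*\Pi_V h = (I\otimes A_i)\Pi_V h + (I\otimes B_i C_i)\Pi_V T^*h$, which pointwise matches the identity just established. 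Hence $\Pi_V T_i^* = M_{\Phi_i}^* \Pi_V$.

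The one subtle point I anticipate is the bookkeeping that matches the colligation data with the transfer function: the $B_i(0_\cld, D_{m,T,T_i} T^*h)$ term in the hypothesis looks unrelated to the composition $B_iC_i$ appearing in $\Phi_i$, and the trick is to use the second (defect) identity from $U_i$ to convert it into exactly the form $B_i C_i V D_{m,T} T^* h$ required by $M_{\Phi_i}^*$. Once this substitution is made and the resolvent is inserted, the tensor-product description of $M_{\Phi_i}^*$ makes the intertwining transparent.
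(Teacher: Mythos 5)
Your proof is correct and takes essentially the same route as the paper: both reduce the colligation hypothesis (via the substitution $h\mapsto T^*h$ in the second component equation) to the key identity $VD_{m,T}T_i^* = A_iVD_{m,T} + B_iC_iVD_{m,T}T^*$ and then push it through the resolvent form of $\Pi_V$. The only cosmetic difference is the final verification, where the paper pairs against the monomials $z^n\eta$ while you use the decomposition $M_{\Phi_i}^* = I\otimes A_i + M_z^*\otimes B_iC_i$ together with $M_z^*\Pi_V=\Pi_V T^*$; these are the same computation in different clothing.
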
 

\begin{proof}
 Since
\begin{align*}
 \begin{bmatrix}
A_i  &  B_i \cr
C_i  &   0  \cr
\end{bmatrix} \begin{bmatrix}
VD_{m,T}h \cr (0_{\cld}, D_{m,T,T_i}T^*h)
\end{bmatrix} 
= \begin{bmatrix}
VD_{m,T}T_i^*h \cr (0_{\cld}, D_{m,T, T_i}h)
\end{bmatrix}, \quad (h\in\clh, i=1,2)
\end{align*}
we have
\[ A_iVD_{m,T}h + B_i (0_{\cld},D_{m,T,T_i}T^*h) 
= VD_{m,T}T_i^*h , \quad C_iVD_{m,T}h =(0_{\cld}, D_{m,T,T_i}h),
\]
for all $h\in\clh$ and $i=1,2$. Simplifying further, we get
\[ VD_{m,T}T_i^* = A_iVD_{m,T} + B_iC_iVD_{m,T}T^* \]
for all $i=1,2$.
Finally, if $n\geq 1$, $h\in \clh$ and $\eta \in \cle$ then 
\begin{align*}
\langle M_{{\Phi}_i}^* \Pi_Vh, z^n \eta \rangle &= \langle (I \otimes V)D_{m,T}(1-zT^*)^{-m}h, (A_i^*+ zC_i^*B_i^*)(z^n \eta) \rangle \\
&= \langle (A_iVD_{m,T} + B_iC_iVD_{m,T^*}T^*)T^{*n}h, \eta \rangle \\ 
&= \langle VD_{m,T}T_i^*(T^{*n}h), \eta \rangle \\
&= \langle \Pi_VT_i^*h, z^n \eta \rangle, \quad (i=1,2).
\end{align*}
Therefore, we get $\Pi_VT_i^* = M_{\Phi_i}^*\Pi_V$ for all $i=1,2$.
This ends the proof.

\end{proof}

 Let $(T_1,T_2)\in\clf_m(\clh)$ with $T=T_1T_2$.
The following identity, as in the proof of Lemma~\ref{sufficient},   
\[
K_m^{-1}(T,T^*)=D_{m,T,T_1}^2 + T_1D_{m,T,T_2}^2T_1^* =
 D_{m,T,T_2}^2 + T_2D_{m,T,T_1}^2T_2^*,\]
implies that for all $h\in\clh$,
\[
 \|D_{m,T,T_1}h\|^2+\|D_{m,T,T_2}T_1^*h\|^2=\|D_{m,T,T_2}h\|^2+
 \|D_{m,T,T_1}T_2^* h\|^2.
\]

This leads us to define isometries 
\[ U: \{D_{m,T,T_1}h \oplus D_{m,T,T_2}T_1^*h: h \in \clh \} \rightarrow
\{D_{m,T,T_2}h \oplus D_{m,T,T_1}T_2^*h: h \in \clh \} \] 
defined by 
\begin{equation}\label{special U} 
U \big( D_{m,T,T_2}h , D_{m,T,T_1}T_2^*h \big)
= \big( D_{m,T,T_1}h, D_{m,T,T_2}T_1^*h \big),\quad (h\in\clh)
\end{equation}  
and $V:\cld_{m,T}\to \cld_{m,T,T_1}\oplus \cld_{m,T,T_2}$ defined by 
\begin{equation}
 \label{special V}
 V(D_{m,T}h)=(D_{m,T,T_1}h, D_{m,T,T_2}T_1^* h)\quad (h\in\clh). 
\end{equation}
We are now ready to prove the explicit dilation result for the pure case.

\begin{Theorem}\label{pure-dilation}
Let $(T_1,T_2) \in \clf_m(\clh) $ be such that
 $T=T_1T_2$ is a pure contraction.
Then there exist a triple $(\cle, U,P)$ consists of a Hilbert space 
$\cle$, a unitary $U$ on $\cle$ and a projection $P$ in $\mathcal B (\cle)$
and an isometry $\Pi:\clh\to A^2_m(\cle)$ such that 
\[
\Pi T_1^*=M_{\Phi}^*\Pi,\  \Pi T_2^*=M_{\Psi}^*\Pi, \text { and }
\Pi T^*=M_z^*\Pi
\]
where 
$\Phi$ and $\Psi$ are the $\clb(\cle)$-valued canonical Schur functions on $\D$
corresponding to the triple $(\cle,U,P)$ given by 
\[\Phi(z)=(P+zP^{\perp})U^*\ \text{ and }\ \Psi(z)= U(P^{\perp}+zP)
\]
for all $z\in\D$.

In particular, $\clq=\mbox{ran}\Pi$ is a joint
 $(M_{\Phi}^*,M_{\Psi}^*, M_{z}^*)$-invariant subspace of
  $A^2_m(\cle)$ such that
  
 \[
 T_1\cong P_{\clq}M_{\Phi}|_{\clq},\  T_2\cong P_{\clq}M_{\Psi}|_{\clq}
 \text{ and } T\cong P_{\clq}M_z|_{\clq}.
 \]
\end{Theorem}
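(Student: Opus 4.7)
The plan is to adapt the Das--Sarkar--Sarkar approach (Theorem~3.1 of~\cite{DSS}) from the Hardy space to the weighted Bergman space $A^2_m$, invoking the canonical dilation map from Theorem~\ref{Dilation} in place of the Sz.-Nagy--Foias dilation, and replacing every classical defect $D_T$, $D_{T_i}$ with its $m$-hypercontractive counterpart $D_{m,T}$, $D_{m,T,T_i}$. Since $T=T_1T_2$ is a pure contraction and (by Lemma~\ref{sufficient}) an $m$-hypercontraction, the discussion following Proposition~\ref{norm identity} ensures that $\Pi_{m,T}\colon\clh\to A^2_m(\cld_{m,T})$ is an isometry intertwining $T^*$ with $M_z^*$. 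This will serve as the base dilation; the remaining work is to enlarge the coefficient space $\cld_{m,T}$ to a Hilbert space $\cle$ on which the canonical pair $(\Phi,\Psi)$ acts compatibly.

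I would first exploit the identity
\[
K_m^{-1}(T,T^*) \;=\; D_{m,T,T_1}^2+T_1D_{m,T,T_2}^2T_1^* \;=\; D_{m,T,T_2}^2+T_2D_{m,T,T_1}^2T_2^*,
\]
which, as in Lemma~\ref{sufficient}, is a consequence of the commutativity $T=T_1T_2=T_2T_1$. This identity makes the map in~\eqref{special U} a well-defined partial isometry and the map in~\eqref{special V} a well-defined isometry. Enlarging $\cld_{m,T,T_i}$ to a Hilbert space $\cle_i$ for $i=1,2$ via a standard defect-space construction, one then forms $\cle:=\cle_1\oplus\cle_2$ with $P$ the orthogonal projection onto $\cle_1$, and extends $U$ to a unitary on $\cle$ and $V$ to an isometry $V\colon\cld_{m,T}\to\cle$. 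The triple $(\cle,U,P)$ is then the one required, and $\Pi:=(I\otimes V)\Pi_{m,T}$ is an isometry $\clh\to A^2_m(\cle)$ that automatically inherits $\Pi T^*=M_z^*\Pi$ from $\Pi_{m,T}$.

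To establish $\Pi T_i^*=M_{\Phi_i}^*\Pi$ with $\Phi_1=\Phi$ and $\Phi_2=\Psi$, I would appeal to Lemma~\ref{transfer fn}. The task there is to construct colligation unitaries
\[
U_i=\begin{bmatrix}A_i & B_i\\ C_i & 0\end{bmatrix}\colon\cle\oplus(\cld\oplus\cld_{m,T,T_i})\to\cle\oplus(\cld\oplus\cld_{m,T,T_i})
\]
(for a suitable auxiliary Hilbert space $\cld$) whose transfer functions $A_i^*+zC_i^*B_i^*$ match the canonical Schur functions, and which act as prescribed on the vectors $(VD_{m,T}h,0_{\cld},D_{m,T,T_i}T^*h)$. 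Expanding $\Phi(z)=PU^*+zP^{\perp}U^*$ and $\Psi(z)=UP^{\perp}+zUP$ forces the diagonal choices $A_1=UP$ and $A_2=P^{\perp}U^*$; the off-diagonal products $B_1C_1=UP^{\perp}$ and $B_2C_2=PU^*$ are then factored canonically through $\cld$ and $\cld_{m,T,T_i}$, with unitarity of each $U_i$ following from the Pythagorean identities $A_iA_i^*+B_iB_i^*=I$ and $A_i^*A_i+C_i^*C_i=I$ on the relevant summands.

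The main obstacle will be verifying the action condition of Lemma~\ref{transfer fn} for these $U_i$. This reduces to expanding $VD_{m,T}T_i^*h$ via~\eqref{special V} and then invoking the defining property of the unitary $U$ from~\eqref{special U}, which packages the commutativity $T_1T_2=T_2T_1$ at the level of the $m$-th order defect spaces. Once the action is checked for both $i=1,2$, Lemma~\ref{transfer fn} yields $\Pi T_i^*=M_{\Phi_i}^*\Pi$, and setting $\clq:=\mathrm{ran}\,\Pi$ produces the claimed joint coinvariant subspace model $T_i\cong P_{\clq}M_{\Phi_i}|_{\clq}$.
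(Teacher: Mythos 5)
Your plan is essentially the paper's own proof: the same base isometry $\Pi_V=(I\otimes V)\Pi_{m,T}$ built from the canonical dilation of the pure $m$-hypercontraction $T$, the same isometries \eqref{special U} and \eqref{special V} coming from the identity $K_m^{-1}(T,T^*)=D_{m,T,T_1}^2+T_1D_{m,T,T_2}^2T_1^*=D_{m,T,T_2}^2+T_2D_{m,T,T_1}^2T_2^*$, and the same reduction to Lemma~\ref{transfer fn} via explicit colligation unitaries whose transfer functions are $\Phi$ and $\Psi$. In outline this is correct, and the deferred verification is exactly the computation the paper carries out.

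One concrete slip, though: with the $U$ and $V$ you cite from \eqref{special U} and \eqref{special V} (so that $VD_{m,T}h$ carries $D_{m,T,T_1}h$ in the first defect slot and $D_{m,T,T_2}T_1^*h$ in the second), the projection $P$ appearing in $\Phi(z)=(P+zP^{\perp})U^*$ must be the projection onto the $\cld_{m,T,T_2}$-summand of $\cle$ (in the paper, $P=\iota_2\iota_2^*$, with $P^{\perp}$ onto $\cld\oplus\cld_{m,T,T_1}$), not onto your $\cle_1\supseteq\cld_{m,T,T_1}$ as you state. With your choice, the first-row identity in Lemma~\ref{transfer fn} for $U_1$ requires evaluating $U$ on vectors of the form $(0_{\cld},D_{m,T,T_1}h,0)$, where the defining relation \eqref{special U} gives no information, and the factorization $B_1C_1=UP^{\perp}$ together with $C_1VD_{m,T}h=(0_{\cld},D_{m,T,T_1}h)$ would force a map sending $D_{m,T,T_2}T_1^*h\mapsto D_{m,T,T_1}h$, which is not well defined (no norm identity supports it). Taking $P=\iota_2\iota_2^*$ fixes this, and then your verification goes through; note also that unitarity of the colligations is most easily seen by writing them as products of two explicit unitaries, e.g.\ $U_1=\bigl(\begin{smallmatrix}U&0\\0&I\end{smallmatrix}\bigr)\bigl(\begin{smallmatrix}P&\iota_1\\ \iota_1^{*}&0\end{smallmatrix}\bigr)$, since the two Pythagorean identities you list do not by themselves guarantee that the block matrix with vanishing corner is unitary.
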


\begin{proof}
We first consider the isometry $U$ as in ~\eqref{special U}
and by adding an infinite dimensional Hilbert space $\cld$,
if necessary, we extend it to a unitary on
$\cle:=(\cld\oplus \cld_{m,T,T_1})\oplus \cld_{m,T,T_2}$.
We continue to denote the 
unitary by $U$, and therefore we have a unitary $U:\cle\to \cle $ which satisfies 
\[
 U\big(0_{\cld},D_{m,T,T_1}T_2^*h, D_{m,T,T_2}h\big)
= \big(0_{\cld}, D_{m,T,T_1}h, D_{m,T,T_2}T_1^*h \big),\quad (h\in\clh).
\]
Also we view the isometry $V$ in ~\eqref{special V}, as an isometry
$V: \cld_{m,T}\to \cle$ defined by 
\[
 V(D_{m,T}h)=(0_{\cld},D_{m,T,T_1}h, D_{m,T,T_2}T_1^* h)\quad (h\in\clh).
\]
Since $T$ is a pure $m$-hypercontraction, then the canonical dilation map 
$\Pi_{m,T}: \clh\to A^2_m(\cld_{m,T})$ is an isometry, and as a result
\begin{equation}
\label{PiV}
 \Pi_V=(I\otimes V)\Pi_{m,T}: \clh\to A^2_m(\cle)
\end{equation}
is also an isometry. The isometry $\Pi_V$ will be the dilation map in this 
context. 

To complete the proof of the theorem, we construct 
unitaries which satisfy the hypothesis of Lemma~\ref{transfer fn}.
To this end, we consider the inclusion maps 
$\iota_1: \cld\oplus\cld_{m,T,T_1} \to \cle $ and 
$\iota_2:\cld_{m,T,T_2}\to\cle$ defined by 
\[ 
\iota_1(h,k_1) = (h,k_1,0) \ \text{ and } i_2(k_2)= (0,0,k_2),
(h\in \cld, k_1\in\cld_{m,T,T_1}, k_2\in\cld_{m,T,T_2}).
\]
We also consider the orthogonal projection $P=\iota_2\iota_2^*$.
Then it is easy to check that 
\[
 \begin{bmatrix}
   P & \iota_1 \cr \iota_1^* & 0
\end{bmatrix}: \cle \oplus (\cld\oplus\cld_{m,T,T_1}) \to
\cle \oplus (\cld\oplus\cld_{m,T,T_1}) 
\] 
and 
\[
\begin{bmatrix}
 P^{\perp} & \iota_2 \cr \iota_2^* & 0
\end{bmatrix}: \cle \oplus \cld_{m,T,T_2} \to \cle \oplus \cld_{m,T,T_2}  
\]
are unitary.
The unitary 
\[ U_1:= \begin{bmatrix}
U &  0 \cr 0 & I 
\end{bmatrix} \begin{bmatrix}
P  &  i_1 \cr  i_1^* &  0 
\end{bmatrix}= \begin{bmatrix}
UP  &  Ui_1  \cr  i_1^*  &  0 
\end{bmatrix}: \cle \oplus (\cld\oplus\cld_{m,T,T_1}) \to 
\cle \oplus (\cld\oplus\cld_{m,T,T_1}), \] 
satisfies
\begin{align*}
U_1
\begin{bmatrix}
VD_{m,T}h \cr D_{m,T,T_1}T^*h
\end{bmatrix}
&= 
\begin{bmatrix}
UP  &  Ui_1  \cr  i_1^*  &  0 
\end{bmatrix} 
\begin{bmatrix}
VD_{m,T}h \cr D_{m,T,T_1}T^*h
\end{bmatrix} \\
&= \begin{bmatrix}
U(0_{\cld}, D_{m,T,T_1}T_2^*T_1^*h, D_{m,T,T_2}T_1^*h ) 
\cr (0_{\cld},D_{m,T,T_1}h)
\end{bmatrix} \\
&= \begin{bmatrix}
(0_{\cld}, D_{m,T,T_1}T_1^*h, D_{m,T,T_2}T_1^{*2}h )
\cr (0_{\cld},D_{m,T,T_1}h)
\end{bmatrix} \\
&= \begin{bmatrix}
VD_{m,T}T_1^*h \cr (0_{\cld}, D_{m,T,T_1}h)
\end{bmatrix},
\end{align*}
for all $h\in\clh$.
Subsequently, a similar computation also shows that the unitary 
\[ U_2:= \begin{bmatrix}
P^{\perp}  &  \iota_2 \cr  \iota_2^* &  0 
\end{bmatrix} \begin{bmatrix}
U^* &  0 \cr 0 & I 
\end{bmatrix} : \cle \oplus \cld_{m,T,T_2} \to
\cle \oplus \cld_{m,T,T_2},\]
satisfies
\[
 U_2(V\cld_{m,T}h, D_{m,T,T_2}T^*h)= 
 (VD_{m,T}T_2^* h, D_{m,T,T_2}h)
\]
for all $h\in \clh$.
The proof now follows by appealing Lemma~\ref{transfer fn}
for the unitaries $U_1$ and $U_2$. 
\end{proof}
 
 \begin{Remark}
 \label{R-pure}
The converse of the above theorem is also true. That is, if $(T_1,T_2,T)$
is a triple of commuting contractions on $\clh$ and if $(T_1,T_2,T)$ dilates 
to $(M_{\Phi}, M_{\Psi}, M_z)$ on $A^2_m(\cle)$ for some Hilbert space $\cle$
where $\Phi$ and $\Psi$ are $\clb(\cle)$-valued canonical 
Schur functions on $\D$ corresponding to a  
 triple $(\cle, U,P)$, then $(T_1,T_2)\in\clf_m(\clh)$
and $T=T_1T_2$. This follows immediately from the fact that
$(M_{\Phi}, M_{\Psi}, M_z)\in \clf_m(A^2_m(\cle))$.
 \end{Remark}


Having obtained the explicit dilation for the pure case, we now 
drop the pure assumption and find dilation for the general case.
\begin{Theorem}\label{general-dilation}
Let $(T_1,T_2) \in \clf_m(\clh)$ with $T=T_1T_2$. Then there exist 
a triple $(\cle, U,P)$ consists of a Hilbert space $\cle$, a unitary $U$
 on $\cle$ and an orthogonal projection $P$ in $\clb(\clh)$, a Hilbert space 
 $\clr$, a pair of commuting unitaries $(W_1,W_2)$ 
 on a Hilbert space $\clr$ with
  $W=W_1W_2$ and an isometry $\Pi:\clh\to A^2_m(\cle)$ such that
\[
\Pi T_1^*=(M_{\Phi}\oplus W_1)^*\Pi,\ 
\Pi T_2^*=(M_{\Psi}\oplus W_2)^*\Pi\
\text {and } \Pi T^*= (M_z\oplus W)^*\Pi
\]  
 where $\Phi$ and $\Psi$ are the $\clb(\cle)$-valued 
 canonical Schur function on $\D$ corresponding to the 
 triple $(\cle, U,P)$ given by 
  \[
  \Phi(z)=(P+zP^{\perp})U^* \ \text{ and } \Psi(z)=U(P^{\perp}+zP)
  \] 
  for all $z\in\D$. 
  
 In particular,  $\clq=\mbox{ran}\Pi$ is a joint
  $(M_z^*\oplus W^*, M_{\Phi}^*\oplus W_1^*, M_{\Psi}^*\oplus W_2^*)$-invariant subspace of $A^2_m(\cle)\oplus \clr$ such that
\[
T_1\cong P_{\clq}(M_{\Phi}\oplus W_1)|_{\clq}, T_2\cong P_{\clq}(M_{\Psi}\oplus W_2)|_{\clq}
\ \text{ and } T\cong P_{\clq}(M_z\oplus W)|_{\clq}.
\]
\end{Theorem}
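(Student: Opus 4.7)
The strategy combines the explicit pure-case construction of Theorem~\ref{pure-dilation} with the Douglas-type dilation of Theorem~\ref{Dilation}, treating the pure and residual components separately. Since Lemma~\ref{sufficient} gives that $T=T_1T_2$ is an $m$-hypercontraction, Theorem~\ref{Dilation} produces an isometric dilation $\Pi_T h=(\Pi_{m,T}h, Q_{m,T}h)$ of $T$ into $A^2_m(\cld_{m,T})\oplus\clr$, with unitary $W$ on $\clr$ arising as the minimal unitary extension of the isometry $X^*$ on $\clq_{m,T}$ defined by $X^*Q_{m,T}=Q_{m,T}T^*$. For the pure summand I would reuse verbatim the construction in Theorem~\ref{pure-dilation}: introduce the isometry $V:\cld_{m,T}\to\cle=(\cld\oplus\cld_{m,T,T_1})\oplus\cld_{m,T,T_2}$ and the unitaries $U_1,U_2$ fitting the hypothesis of Lemma~\ref{transfer fn}, producing canonical Schur functions $\Phi,\Psi$ of the required form such that $(I\otimes V)\Pi_{m,T}$ intertwines $T_i^*$ with $M_{\Phi_i}^*$ for $i=1,2$.

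The candidate dilation map is then $\Pi h=((I\otimes V)\Pi_{m,T}h,\,Q_{m,T}h)$, which is an isometry into $A^2_m(\cle)\oplus\clr$ by Proposition~\ref{norm identity}(ii) combined with the isometry of $V$, and which already intertwines correctly on the first summand. What remains is to construct commuting unitaries $W_1,W_2$ on $\clr$ with $W_1W_2=W$ satisfying $Q_{m,T}T_i^*=W_i^*Q_{m,T}$ on $\clh$ for $i=1,2$. The plan is to first establish the identity $T_iQ_{m,T}^2T_i^*=Q_{m,T}^2$ by running an induction parallel to the one used for $TQ_{m,T}^2T^*=Q_{m,T}^2$ in Theorem~\ref{Dilation}, now exploiting the commutation $T=T_1T_2=T_2T_1$ together with the defining positivity of $\clf_m(\clh)$. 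This yields isometries $X_i^*$ on $\clq_{m,T}$ with $X_i^*Q_{m,T}=Q_{m,T}T_i^*$, and since $T_1^*T_2^*=T^*=T_2^*T_1^*$ one reads off $X_1^*X_2^*=X_2^*X_1^*=X^*$. Extending $X_1^*,X_2^*$ to commuting isometries on a common residual space whose product is the unitary $W$ then yields $W_1,W_2$, which are automatically unitary because isometric factors of a unitary must themselves be unitary.

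The main obstacle is the identity $T_iQ_{m,T}^2T_i^*=Q_{m,T}^2$: it does not follow from $TQ_{m,T}^2T^*=Q_{m,T}^2$ alone and must be extracted by adapting the telescoping computation in the proof of Theorem~\ref{Dilation} so that the defining inequality $K_{m-1}^{-1}(T,T^*)-T_iK_{m-1}^{-1}(T,T^*)T_i^*\geq 0$ of $\clf_m(\clh)$ enters at the correct step. Once this is in hand, extending $X_1^*,X_2^*$ to commuting unitary factors of $W^*$ on $\clr$ is a joint Sz.-Nagy--Foias type extension, and the three intertwining relations for $\Pi$ then assemble routinely.
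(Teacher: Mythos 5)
Your overall architecture is the same as the paper's: reuse the pure-case data $(\cle,U,P)$, $V$, $\Phi$, $\Psi$ from Theorem~\ref{pure-dilation} for the Bergman summand, take $\Pi h=(\Pi_Vh,\,Q_{m,T}h)$, and handle the residual summand through operators intertwining with $Q_{m,T}$. The gap is in the step you yourself flag as the main obstacle: you assert that the \emph{identity} $T_iQ_{m,T}^2T_i^*=Q_{m,T}^2$ can be established ``by running an induction parallel to the one used for $TQ_{m,T}^2T^*=Q_{m,T}^2$''. It cannot. For $T$ itself the telescoping is exact, $f_{r+1}^{(n+1)}(T,T^*)-Tf_r^{(n+1)}(T,T^*)T^*=f_{r+1}^{(n)}(T,T^*)-Tf_{r+1}^{(n)}(T,T^*)T^*$, which is why the limit is forced to vanish. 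For $T_i$ the analogous computation (this is exactly what the paper's proof carries out) leaves a remainder:
\begin{align*}
f_r^{(n+1)}(T,T^*)-T_if_r^{(n+1)}(T,T^*)T_i^*
=\big(f_r^{(n)}(T,T^*)-T_if_r^{(n)}(T,T^*)T_i^*\big)
+w_{n+1,r-1}\,T^r\big(K_n^{-1}(T,T^*)-T_iK_n^{-1}(T,T^*)T_i^*\big)T^{*r},
\end{align*}
and the extra term is only known to be \emph{nonnegative} (this is where the $\clf_m(\clh)$ positivity and the intermediate-positivity lemma enter). Since $w_{n+1,r-1}$ grows polynomially in $r$, nothing in the paper (nor any routine argument) makes these remainders tend to $0$ termwise; so the induction delivers only the inequality $Q_{m,T}^2-T_iQ_{m,T}^2T_i^*\ge 0$, not the equality. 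The equality is in fact true, but only a posteriori, after one already knows the $X_i^*$ are isometries — so taking it as the first step is circular.

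The repair is precisely the paper's route, and you already hold all the pieces: from the inequality, Douglas's lemma (Lemma~\ref{Douglas-lemma}) gives \emph{contractions} $X_i$ on $\clq_{m,T}$ with $X_i^*Q_{m,T}=Q_{m,T}T_i^*$; then $X_1^*X_2^*=X_2^*X_1^*=X^*$ with $X^*$ an isometry forces each $X_i^*$ to be an isometry (the ``isometric factor'' argument you invoke later at the level of $W$ should be applied here instead); finally take the minimal commuting unitary extension $(W_1^*,W_2^*,W^*)$ of $(X_1^*,X_2^*,X^*)$ with $W^*=W_1^*W_2^*$ and assemble the three intertwining relations for $\Pi$. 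With the order of these steps corrected, your outline coincides with the paper's proof.
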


\begin{proof}
Let $(\cle, U, P)$ be as in Theorem~\ref{pure-dilation}, and let $V$ be
as in ~\eqref{special V}. Then by the same way as it is done in the  
proof of Theorem~\ref{pure-dilation}, we have 
\begin{align}
\label{first intertwine relation}
\Pi_VT_1^*= M_{\Phi}^*\Pi_V, \Pi_VT_2^*=M_{\Psi}^*\Pi_V\ \text{ and }
\Pi_VT^*=M_z^*\Pi_V,
\end{align}  
where $\Phi(z)=(P+zP^{\perp})U^*$ and $\Psi(z)=U(P^{\perp}+zP)$
for all $z\in\D$, $\Pi_V=(I\otimes V)\Pi_{m, T}$ and
 $\Pi_{m,T}:\clh\to A^2_m(\cld_{m,T}), h\mapsto D_{m,T}(I-zT^*)^{-m}h$
  is the canonical dilation map. However, note that $\Pi_V$ is not an isometry 
  in general. To make it an isometry we follow the construction 
  done in Theorem~\ref{Dilation}.

Let $Q_{m,T}$ be the positive operator defined in ~\eqref{Q} by taking 
strong operator limit of the decreasing sequence of positive operators
$\{f^{(m)}_r(T,T^*)\}_{r=0}^{\infty}$ where 
\[
  f_r^{(m)}(T,T^*) = 1 - \sum_{k=0}^{r-1} w_{m,k} T^k K_m^{-1}(T,T^*)T^{*k}
  \quad (r\ge 0).
\]
It also follows from the proof of Theorem~\ref{Dilation} that 
\[
 TQ_{m,T}^2T^*=Q_{m,T}^2.
\]
We claim here that $Q_{m,T}^2\ge T_iQ_{m,T}T_i^*$ for all $i=1,2$. We prove 
the inequality for $i=1$ as the proof is similar for $i=2$. To this end,
it is enough to show that 
\[f_r^{(m)}(T,T^*)- T_1f_r^{(m)}(T,T^*)T_1^*\ge 0\]
for all $r\ge 0$. For a fixed $r\ge 0$, we use induction 
on $m$ to establish it. Since $f_r^{(1)}(T,T^*)= T^rT^{* r}$, 
it is easy to see that the inequality holds for $m=1$. We 
assume that for some $1\le n<m$,
$f_r^{(n)}(T,T^*)- T_1f_r^{(n)}(T,T^*)T_1^*\ge 0$.
Then 
\begin{align*}
 & f_r^{(n+1)}(T,T^*) - T_1f_r^{(n+1)}(T,T^*)T_1^*\\ 
 &= 1- T_1T_1^* + \sum_{k=0}^{r-1} w_{n+1,k}T^k ( T_1K_{n+1}^{-1}(T,T^*)T_1^* - K_{n+1}^{-1}(T,T^*)) T^{*k}\\
 &= Y_{n+1}-T_1Y_{n+1}T_1^*,
\end{align*}
where 
\begin{align*}
 &Y_{n+1}\\
 &= 1 - \sum_{k=0}^{r-1} w_{n+1,k} T^k(K_{n}^{-1}(T,T^*) - TK_{n}^{-1}(T,T^*))T^{*k} \\
&= 1 - K_{n}^{-1}(T,T^*) - \sum_{k=1}^{r-1} (w_{n+1,k} -
w_{n+1,k-1}) T^kK_{n}^{-1}(T,T^*)T^{*k} + w_{n+1,r-1}T^rB_{n}^{-1}(T,T^*)T^{*r}\\
&= \big( 1 - \sum_{k=0}^{r-1} w_{n,k}T^k K_{n}^{-1}(T,T^*)T^{*k} \big) +
w_{n+1,r-1} T^r K_n^{-1}(T,T^*)T^{*r}\\
&= f_r^{(n)}(T,T^*) + w_{n+1,r-1} T^r K_n^{-1}(T,T^*)T^{*r}.
\end{align*}

Hence we have 
\begin{align*}
& f_r^{(n+1)}(T,T^*) - T_1f_r^{(n+1)}(T,T^*)T_1^* \\
&= \big(f_r^{(n)}(T,T^*) - T_1f_r^{(n)}(T,T^*)T_1^*\big) +
w_{n+1,r-1} T^r \big( K_n^{-1}(T,T^*) - T_1K_n^{-1}(T,T^*)T_1^* \big) T^{*n}\ge 0.\end{align*}
Here we have used the fact that
$ K_n^{-1}(T,T^*) - T_1K_n^{-1}(T,T^*)T_1^*\ge 0$ for all $n=1,\dots,m$.
 This establishes our claim and therefore,
  $Q_{m,T}^2\ge T_iQ^2_{m,T}T_i^*$ for all $i=1,2$.
Then by Lemma~\ref{Douglas-lemma}, there exists a contraction 
$X_i$ on $\clq_{m,T}$ such that 
\begin{align}
\label{second intertwine relation}
 X_i^*Q_{m,T}=Q_{m,T}T_i^* \quad (i=1,2).
\end{align}
Further, since $Q_{m,T}^2=TQ_{m,T}^2T^*$, there is an isometry $X^*$
on $\clq_{m,T}$ such that $X^*Q_{m,T}=Q_{m,T}T^*$.
 It is now evident that 
$X^*=X_1^*X_2^*=X_2^*X_1^*$, and therefore $X_i^*$ is also an isometry
for all $i=1,2$. Let $(W_1^*,W_2^*, W^*)$ on $\clr\supset \clq_{m,T}$
be the minimal unitary extension of $(X_1^*,X_2^*,X^*)$
with
$W^*=W_1^*W_2^*$.

Following Theorem~\ref{Dilation}, consider the map 
 $\Pi: \clh \to A^2_m(\cle) \oplus \clr $ defined by 
\[ \Pi(h)= (\Pi_Vh , Q_{m,T}h),\quad (h\in\clh).\]
Then, by Proposition~\ref{norm identity} and the 
fact that $V$ is an isometry, it follows that $\Pi$ is an isometry. 
 Moreover, it follows from the relations ~\eqref{first intertwine relation}
  and ~\eqref{second intertwine relation} that 
\[
 \Pi T_1^* = (M_{\Phi}^* \oplus W_1^*)\Pi,
\Pi T_2^* = (M_{\Psi}^* \oplus W_2^*)\Pi\ 
\text{ and } \Pi T^*=(M_{z}^*\oplus W^*)\Pi.
\]
This completes the proof of the theorem.

\end{proof}

We conclude the section with a remark which 
is similar to the pure case.
\begin{Remark}
\label{R-general}
 The converse of the above theorem is also true. Naturally,
this follows from the fact that
 $(M_{\Phi}\oplus W_1, M_{\Psi}\oplus W_2)\in
  \clf_m(A^2_m(\cle)\oplus\clr)$. 
 \end{Remark}

\section{Factorization of hypercontractions}
\label{Factorization}
Combining the dilation results, Theorem~\ref{pure-dilation} and Theorem~\ref{general-dilation}, obtained in the previous section
with Remark~\ref{R-pure} and Remark~\ref{R-general}, 
we get the following immediate characterization of factors 
in the class $\clf_m(\clh)$.

\begin{Theorem}
Let $(T_1,T_2)$ be a pair of contractions on $\clh$. Then the 
following are equivalent:

$\textup{(i)}$ $(T_1,T_2)\in\clf_m(\clh)$;

$\textup{(ii)}$  there exist a pair of commuting unitaries  $(W_1, W_2)$
on a Hilbert space $\clr$ with $W=W_1W_2$ and  $\clb(\cle)$-valued 
 canonical Schur functions 
\[
\Phi(z) = (P+zP^{\perp})U^*\ \text{ and }
\Psi(z)= U(P^{\perp} + zP)\quad (z \in \D) 
\]
corresponding to a 
triple $(\cle,U,P)$ consisting of a Hilbert space 
$\cle$, a unitary $U$ and an orthogonal projection $P$ in $\clb(\cle)$
such that $\clq$ is a joint
$(M_z^*\oplus W^*, M_{\Phi}^* \oplus W_1^*, M_{\Psi}^* \oplus W_2^*)$-invariant subspace of $A^2_m(\cle) \oplus \clr$,
\[ 
T_1 \cong P_{\clq}(M_{\Phi} \oplus W_1)|_{\clq},
T_2 \cong P_{\clq}(M_{\Psi} \oplus W_2)|_{\clq}, 
\text{ and }T \cong P_{\clq}(M_z \oplus W)|_{\clq}.
\] 
 
 In particular, if $T_1T_2$ is a pure contraction then 
 the Hilbert space $\clr=\{0\}$.
\end{Theorem}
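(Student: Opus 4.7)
The plan is to assemble the theorem directly from the four results already established: Theorem~\ref{pure-dilation}, Theorem~\ref{general-dilation}, Remark~\ref{R-pure} and Remark~\ref{R-general}. The forward direction (i) $\Rightarrow$ (ii) is essentially a re-packaging of the dilation theorems, while the converse (ii) $\Rightarrow$ (i) amounts to verifying that the class $\clf_m$ is preserved under the compression to a joint co-invariant subspace of a model pair.

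For (i) $\Rightarrow$ (ii), I would first invoke Lemma~\ref{sufficient} to note that $T = T_1T_2$ is an $m$-hypercontraction whenever $(T_1,T_2)\in\clf_m(\clh)$, so the setup of the dilation theorems applies. Theorem~\ref{general-dilation} then produces the triple $(\cle,U,P)$, the canonical Schur pair $(\Phi,\Psi)$, the pair of commuting unitaries $(W_1,W_2)$ on $\clr$ with $W=W_1W_2$, and the isometry $\Pi\colon\clh\to A^2_m(\cle)\oplus\clr$ with the required intertwining properties. Setting $\clq=\operatorname{ran}\Pi$ gives the joint co-invariant subspace and the three simultaneous compression identities. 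If additionally $T$ is pure, I would instead invoke Theorem~\ref{pure-dilation}, which shows that the canonical dilation map $\Pi_{m,T}$ is already an isometry into $A^2_m(\cld_{m,T})$, so the unitary summand drops out, giving $\clr=\{0\}$.

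For (ii) $\Rightarrow$ (i), the strategy is the content of Remarks~\ref{R-pure} and \ref{R-general}: I would show that $(M_{\Phi}\oplus W_1,\,M_{\Psi}\oplus W_2)\in\clf_m(A^2_m(\cle)\oplus\clr)$, and then pull the membership back to $(T_1,T_2)$ via the co-invariant subspace $\clq$. For the first part, commutativity and the product identity $(M_{\Phi}\oplus W_1)(M_{\Psi}\oplus W_2)=M_z\oplus W$ follow from $\Phi(z)\Psi(z)=zI_{\cle}$ together with $W_1W_2=W$; the positivity condition $K_{m-1}^{-1}-R_iK_{m-1}^{-1}R_i^*\geq 0$ splits across the orthogonal sum because $W_i$ is unitary (so $K_{m-1}^{-1}(W,W^*)=0$ and the unitary block contributes nothing), and the $A^2_m(\cle)$-summand was shown to satisfy this positivity already in Section~\ref{fmh}.

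The key technical point, which I expect to be the main obstacle, is verifying that $\clf_m$ passes to compressions on joint $(R_1^*,R_2^*,R^*)$-invariant subspaces, where $R_i=M_{\Phi_i}\oplus W_i$ and $R=M_z\oplus W$. For $h\in\clq$ and any $k\geq 0$, co-invariance gives $R^{*k}h=T^{*k}h\in\clq$ and $R_i^{*k}h=T_i^{*k}h\in\clq$, hence
\[
\langle K_{m-1}^{-1}(T,T^*)h,h\rangle=\sum_{k=0}^{m-1}(-1)^k\binom{m-1}{k}\|T^{*k}h\|^2=\langle K_{m-1}^{-1}(R,R^*)h,h\rangle,
\]
and similarly with $h$ replaced by $T_i^*h=R_i^*h$. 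Subtracting, the positivity of $K_{m-1}^{-1}(R,R^*)-R_iK_{m-1}^{-1}(R,R^*)R_i^*$ on $\clq$ transfers verbatim to $K_{m-1}^{-1}(T,T^*)-T_iK_{m-1}^{-1}(T,T^*)T_i^*\geq 0$. Commutativity of $T_1,T_2$ and the identity $T=T_1T_2$ are immediate from the analogous relations for the model pair, and this completes the equivalence. The final assertion about pure $T_1T_2$ corresponds to the case $\clr=\{0\}$ already singled out in Theorem~\ref{pure-dilation}.
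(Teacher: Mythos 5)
Your proposal is correct and follows essentially the same route as the paper: the forward implication is exactly Theorem~\ref{pure-dilation} and Theorem~\ref{general-dilation} (with Lemma~\ref{sufficient} guaranteeing $T=T_1T_2$ is an $m$-hypercontraction), and the converse is the content of Remarks~\ref{R-pure} and \ref{R-general}, namely that $(M_{\Phi}\oplus W_1, M_{\Psi}\oplus W_2)\in\clf_m(A^2_m(\cle)\oplus\clr)$. Your explicit verification that membership in $\clf_m$ passes to compressions on joint co-invariant subspaces (via $T_i^{*k}h=R_i^{*k}h$ for $h\in\clq$) is exactly the routine step the paper leaves implicit, and it is carried out correctly.
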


It is now clear that the above theorem is obtained by realizing a factor 
$(T_1,T_2)\in\clf_m(\clh)$ on the dilation space $A^2_m(\cle)\oplus \clr$
of $T=T_1T_2$. However, one would expect to realize $(T_1,T_2)$ on the
canonical dilation space of $T$ as in Theorem~\ref{Dilation}. 

To this end, we first consider $(T_1,T_2)\in\clf_m(\clh)$
with $T=T_1T_2$ is a pure contraction. Let $\Pi_V$ be the dilation 
map as in Theorem~\ref{pure-dilation}, that is 
\[
\Pi_VT_1^*=M_{\Phi}^*\Pi_V, \Pi_VT_2^*=M_{\Psi}^*\Pi_V\ \text{ and }
\Pi_VT^*=M_z^*\Pi_V
\]   
and, by ~\eqref{PiV}, 
\[
\Pi_V=(I\otimes V)\Pi_{m,T},
\]
where $\Pi_{m,T}$ is the isometric canonical dilation map corresponding 
to the pure $m$-hypercontraction $T$ and $V:\cld_{m,T}\to \cle$ is an isometry. Then, by the from of $\Pi_V$, the 
above intertwining relations yield 
\[
 \Pi_{m,T}T_1^*=(I\otimes V^*)M_{\Phi}^*(I\otimes V)\Pi_{m,T}
 \ \text { and }
  \Pi_{m,T}T_2^*=(I\otimes V^*)M_{\Psi}^*(I\otimes V)\Pi_{m,T}.
\]
Set 
\[\tilde{\Phi}(z):=V^*\Phi(z)V\ \text{ and } \tilde{\Psi}(z):=V^*\Psi(z)V,
\quad (z\in\D).\]
Then $\tilde{\Phi}$ and $\tilde{\Psi}$ are $\clb(\cld_{m,T})$-valued 
Schur functions on $\D$ such that 
\[
 \Pi_{m,T}T_1^*=M_{\tilde{\Phi}}^*\Pi_{m,T},
 \Pi_{m,T}T_2^*=M_{\tilde{\Psi}}^*\Pi_{m,T}.
\]
Observant reader may have noticed that 
$\tilde{\Phi}$ and $\tilde{\Psi}$ do not commute, in general.
However, $P_{\clq}M_{\tilde{\Phi}}|_{\clq}$ and
 $P_{\clq}M_{\tilde{\Psi}}|_{\clq}$ commutes and 
 \[
 P_{\clq}M_z|_{\clq}=P_{\clq}M_{\tilde{\Phi}}M_{\tilde{\Psi}}|_{\clq}
 =P_{\clq}M_{\tilde{\Psi}}M_{\tilde{\Phi}}|_{\clq},
 \]
 where $\clq=\mbox{ran}\Pi_{m,T}$.
Thus we have proved the following:
\begin{Theorem}
 Let $T$ be a pure $m$-hypercontraction on $\clh$. Then the following 
 are equivalent.
 \begin{enumerate}
  \item[\textup{$(i)$}]
$T=T_1T_2$ for some $(T_1,T_2)\in\clf_m(\clh)$;
 \item [\textup{$(ii)$}]
 there exist $\clb(\cld_{m,T})$-valued Schur functions
 \[
\tilde{\Phi}(z)=V^*(P+zP^{\perp})U^*V,\ \text{ and } 
\tilde{\Psi}(z)= V^*U(P^{\perp}+zP)V
\quad (z\in\D)
 \]
for some Hilbert space $\cle$, isometry $V:\cld_{m,T}\to \cle$,
unitary $U:\cle\to \cle$ and 
projection $P$ in $\clb(\cle)$ such that $\clq$ is a joint
 $(M_{\tilde{\Phi}}^*, M_{\tilde{\Psi}}^*)$-invariant subspace of $A^2_m(\cld_{m,T})$,
 \[
  P_{\clq}M_z|_{\clq}=P_{\clq}M_{\tilde{\Phi}\tilde{\Psi}}|_{\clq}
  =P_{\clq}M_{\tilde{\Psi}\tilde{\Phi}}|_{\clq},
 \]
and 
\[
 T_1\cong P_{\clq}M_{\tilde{\Phi}}|_{\clq},\
 T_2\cong P_{\clq}M_{\tilde{\Psi}}|_{\clq}.
\]
 \end{enumerate}
\end{Theorem}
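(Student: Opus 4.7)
The plan is to transfer between two dilation realizations of $T$: the canonical dilation on $A^2_m(\cld_{m,T})$ produced by Theorem~\ref{Dilation} (which, by purity of $T$, is actually isometric) and the larger dilation space $A^2_m(\cle)$ produced by Theorem~\ref{pure-dilation}. The bridge is the isometry $V\colon\cld_{m,T}\to\cle$ from~\eqref{special V} together with its inflation $I\otimes V\colon A^2_m(\cld_{m,T})\to A^2_m(\cle)$. Through the identification $\Pi_V=(I\otimes V)\Pi_{m,T}$, the intrinsic compression on $\clq=\mbox{ran}\,\Pi_{m,T}\subseteq A^2_m(\cld_{m,T})$ is unitarily equivalent to the compression on $\tilde\clq:=(I\otimes V)\clq=\mbox{ran}\,\Pi_V\subseteq A^2_m(\cle)$.

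For $\textup{(i)}\Rightarrow\textup{(ii)}$, I would first invoke Theorem~\ref{pure-dilation} to obtain a triple $(\cle,U,P)$, the $\clb(\cle)$-valued canonical Schur functions $\Phi,\Psi$, and the isometry $\Pi_V$ with $\Pi_VT_1^*=M_\Phi^*\Pi_V$ and $\Pi_VT_2^*=M_\Psi^*\Pi_V$. Pulling these intertwining relations through $I\otimes V^*$ and setting $\tilde\Phi(z):=V^*\Phi(z)V$, $\tilde\Psi(z):=V^*\Psi(z)V$ yields $\clb(\cld_{m,T})$-valued Schur functions satisfying $\Pi_{m,T}T_1^*=M_{\tilde\Phi}^*\Pi_{m,T}$ and $\Pi_{m,T}T_2^*=M_{\tilde\Psi}^*\Pi_{m,T}$. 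Purity of $T$ forces $\Pi_{m,T}$ to be an isometry, so $\clq$ is jointly $(M_{\tilde\Phi}^*,M_{\tilde\Psi}^*,M_z^*)$-invariant and the compression formulas $T_1\cong P_\clq M_{\tilde\Phi}|_\clq$, $T_2\cong P_\clq M_{\tilde\Psi}|_\clq$, $T\cong P_\clq M_z|_\clq$ are immediate. The identity $P_\clq M_z|_\clq=P_\clq M_{\tilde\Phi\tilde\Psi}|_\clq=P_\clq M_{\tilde\Psi\tilde\Phi}|_\clq$ then follows from $T_1T_2=T_2T_1=T$, since compressions to a coinvariant subspace multiply correctly (using that $M_{\tilde\Phi}^*$ and $M_{\tilde\Psi}^*$ leave $\clq$ invariant).

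For $\textup{(ii)}\Rightarrow\textup{(i)}$, I would define $T_1:=P_\clq M_{\tilde\Phi}|_\clq$ and $T_2:=P_\clq M_{\tilde\Psi}|_\clq$; these are contractions and the hypothesis in~(ii) yields $T_1T_2=T_2T_1=T$. To verify $(T_1,T_2)\in\clf_m(\clh)$, I would lift the picture to $A^2_m(\cle)$ via $I\otimes V$: using $M_{\tilde\Phi}=(I\otimes V^*)M_\Phi(I\otimes V)$, the analogous identity for $\tilde\Psi$, and $P_{\tilde\clq}=(I\otimes V)P_\clq(I\otimes V)^*$, the triple $(T_1,T_2,T)$ becomes unitarily equivalent to the compression of $(M_\Phi,M_\Psi,M_z)$ to $\tilde\clq$. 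Since $(M_\Phi,M_\Psi)\in\clf_m(A^2_m(\cle))$ by the model computation in Section~\ref{fmh}, the required Schur-type positivities for $(T_1,T_2)$ reduce, via the semi-invariance identity
\[
K_{m-1}^{-1}(T,T^*)-T_iK_{m-1}^{-1}(T,T^*)T_i^*=P_{\tilde\clq}\bigl[K_{m-1}^{-1}(M_z,M_z^*)-M_{\Phi_i}K_{m-1}^{-1}(M_z,M_z^*)M_{\Phi_i}^*\bigr]\bigl|_{\tilde\clq},
\]
to compressions of nonnegative operators on $A^2_m(\cle)$, which remain nonnegative.

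The principal subtlety is that $\tilde\Phi(z)$ and $\tilde\Psi(z)$ need not commute as matrix-valued functions on $\D$, and indeed $M_{\tilde\Phi}M_{\tilde\Psi}\ne M_z$ globally on $A^2_m(\cld_{m,T})$ in general; only their compressions to $\clq$ commute and match $P_\clq M_z|_\clq$. Accordingly, every positivity, commutativity, or factorization calculation must be carried out either upstairs on $A^2_m(\cle)$, where $\Phi\Psi=\Psi\Phi=zI_\cle$ holds genuinely, or downstairs at the compressed level on $\clq$, and never by manipulating $M_{\tilde\Phi}M_{\tilde\Psi}$ directly. Keeping this bookkeeping straight and correctly tracking which semi-invariance identity is in force at each step is the main technical point of the argument.
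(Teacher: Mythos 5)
Your argument for $\textup{(i)}\Rightarrow\textup{(ii)}$ is exactly the paper's: conjugate the dilation of the pure case by $I\otimes V$, set $\tilde\Phi=V^*\Phi V$, $\tilde\Psi=V^*\Psi V$, use purity to make $\Pi_{m,T}$ isometric, and read off the compression identities on $\clq=\mbox{ran}\,\Pi_{m,T}$ (the paper only writes out this direction and leaves the converse implicit). The problem is in your $\textup{(ii)}\Rightarrow\textup{(i)}$ step, precisely at the displayed ``semi-invariance identity''. That identity presupposes that $\tilde\clq=(I\otimes V)\clq$ is invariant under $M_{\Phi_i}^*$ on $A^2_m(\cle)$ (where $\Phi_1=\Phi$, $\Phi_2=\Psi$), so that $T_i^*\cong M_{\Phi_i}^*|_{\tilde\clq}$. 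But hypothesis (ii) only gives invariance of $\clq$ under $M_{\tilde\Phi}^*$ and $M_{\tilde\Psi}^*$, i.e.\ under the compressed operators $(I\otimes V^*)M_{\Phi_i}^*(I\otimes V)$; transported upstairs this yields $T_i^*\cong (I\otimes VV^*)M_{\Phi_i}^*|_{\tilde\clq}$, and nothing in (ii) forces $M_{\Phi_i}^*\tilde\clq\subseteq\tilde\clq$: the range of $I\otimes V$ reduces $M_z\otimes I_{\cle}$ but not $M_{\Phi_i}$. (This co-invariance does hold for the $\tilde\clq=\mbox{ran}\,\Pi_V$ produced in the forward construction, but it is not part of the data assumed in (ii).) So the identity you rely on is unjustified, and in general false.

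The conclusion survives, however, because the identity is only needed as an inequality in the favorable direction, and that inequality is cheap. Since $K_{m-1}^{-1}(M_z,M_z^*)$ on $A^2_m(\cle)$ has the form $X\otimes I_{\cle}$ with $X\ge 0$, it commutes with the projection $I\otimes VV^*$, whence $(I\otimes VV^*)K_{m-1}^{-1}(M_z,M_z^*)(I\otimes VV^*)=X\otimes VV^*\le K_{m-1}^{-1}(M_z,M_z^*)$. Using $K_{m-1}^{-1}(T,T^*)\cong P_{\tilde\clq}K_{m-1}^{-1}(M_z,M_z^*)|_{\tilde\clq}$ (here co-invariance of $\clq$ under $M_z^*$, implicit in reading (ii) as $T\cong P_{\clq}M_z|_{\clq}$ on the canonical model space --- a tacit assumption you share with the paper --- does pass up to $\tilde\clq$ because $\mbox{ran}(I\otimes V)$ reduces $M_z\otimes I_{\cle}$) and $T_i^*\cong(I\otimes VV^*)M_{\Phi_i}^*|_{\tilde\clq}$, one gets for $g\in\tilde\clq$
\begin{align*}
\bigl\langle T_iK_{m-1}^{-1}(T,T^*)T_i^*g,\,g\bigr\rangle
&=\bigl\langle (I\otimes VV^*)K_{m-1}^{-1}(M_z,M_z^*)(I\otimes VV^*)M_{\Phi_i}^*g,\,M_{\Phi_i}^*g\bigr\rangle\\
&\le\bigl\langle K_{m-1}^{-1}(M_z,M_z^*)M_{\Phi_i}^*g,\,M_{\Phi_i}^*g\bigr\rangle,
\end{align*}
so that $K_{m-1}^{-1}(T,T^*)-T_iK_{m-1}^{-1}(T,T^*)T_i^*$ dominates the compression to $\tilde\clq$ of $K_{m-1}^{-1}(M_z,M_z^*)-M_{\Phi_i}K_{m-1}^{-1}(M_z,M_z^*)M_{\Phi_i}^*$, which is nonnegative because $(M_{\Phi},M_{\Psi})\in\clf_m(A^2_m(\cle))$. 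With ``$=$'' replaced by ``$\ge$'' in this way, your reduction to the model positivity goes through; as written, the step does not.
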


We also have the following analogous result
for general $m$-hypercontractions.

\begin{Theorem}
\label{general factorization}
 Let $T$ be an $m$-hypercontraction on $\clh$. Then the following 
 are equivalent.
 \begin{enumerate}
  \item[\textup{$(i)$}]
 $T=T_1T_2$ for some $(T_1,T_2)\in\clf_m(\clh)$;
 \item [\textup{$(ii)$}]
 there exist a commuting pair of unitaries 
 $(W_1,W_2)$ on a Hilbert space $\clr$ with $W=W_1W_2$ and $\clb(\cld_{m,T})$-valued Schur functions
 \[
\tilde{\Phi}(z)=V^*(P+zP^{\perp})U^*V,\ \text{ and } 
\tilde{\Psi}(z)= V^*U(P^{\perp}+zP)V
\quad (z\in\D)
 \]
for some Hilbert space $\cle$, isometry $V:\cld_{m,T}\to \cle$,
unitary $U:\cle\to \cle$ and 
projection $P$ in $\clb(\cle)$
such that $\clq$ is a joint
 $(M_{\tilde{\Phi}}^*\oplus W_1^*, M_{\tilde{\Psi}}^*\oplus W_2^*)$-invariant subspace of $A^2_m(\cld_{m,T})\oplus \clr$,
 \[
  P_{\clq}(M_z\oplus W)|_{\clq}=
  P_{\clq}(M_{\tilde{\Phi}\tilde{\Psi}}\oplus W)|_{\clq}
  =P_{\clq}(M_{\tilde{\Psi}\tilde{\Phi}}\oplus W)|_{\clq},
 \]
and 
\[
 T_1\cong P_{\clq}(M_{\tilde{\Phi}}\oplus W_1)|_{\clq},\
 T_2\cong P_{\clq}(M_{\tilde{\Psi}}\oplus W_2)|_{\clq}.
\]
 \end{enumerate}
\end{Theorem}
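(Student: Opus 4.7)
The plan is to replay the derivation that produced Theorem~\ref{pure-dilation} and the subsequent pure-case factorization, now with Theorem~\ref{general-dilation} in place of Theorem~\ref{pure-dilation}, and then perform the same ``conjugation by $V$'' step used in the paragraphs preceding this theorem to descend from the dilation space $A^2_m(\cle)\oplus\clr$ down to the canonical dilation space $A^2_m(\cld_{m,T})\oplus\clr$. Only the forward direction $(i)\Rightarrow(ii)$ requires substantial work; the converse is routine in the spirit of Remark~\ref{R-general}.

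For $(i)\Rightarrow(ii)$, given $(T_1,T_2)\in\clf_m(\clh)$ with $T=T_1T_2$, I would first invoke Theorem~\ref{general-dilation} to obtain the triple $(\cle,U,P)$, the commuting unitaries $(W_1,W_2)$ on $\clr$ with $W=W_1W_2$, and the isometric dilation $\Pi=(\Pi_V,Q_{m,T}):\clh\to A^2_m(\cle)\oplus\clr$, where $\Pi_V=(I\otimes V)\Pi_{m,T}$ with $V:\cld_{m,T}\to\cle$ the isometry from~\eqref{special V} and $Q_{m,T}$ the positive operator from~\eqref{Q}. Inspection of that proof also yields $Q_{m,T}T_i^*=W_i^*Q_{m,T}$ for $i=1,2$ and $Q_{m,T}T^*=W^*Q_{m,T}$. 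I would then form $\tilde\Pi=(\Pi_{m,T},Q_{m,T}):\clh\to A^2_m(\cld_{m,T})\oplus\clr$, which is an isometry by Proposition~\ref{norm identity}(ii), and set $\tilde\Phi(z)=V^*(P+zP^{\perp})U^*V$ and $\tilde\Psi(z)=V^*U(P^{\perp}+zP)V$. A pointwise calculation yields $M_{\tilde\Phi}=(I\otimes V^*)M_\Phi(I\otimes V)$, and applying $(I\otimes V^*)$ on the left of the intertwiner $(I\otimes V)\Pi_{m,T}T_1^*=M_\Phi^*(I\otimes V)\Pi_{m,T}$, together with $V^*V=I_{\cld_{m,T}}$, gives $\Pi_{m,T}T_1^*=M_{\tilde\Phi}^*\Pi_{m,T}$, and analogously for $T_2$. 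Combined with the $W_i^*$-intertwinings of $Q_{m,T}$, this produces
\[
\tilde\Pi T_1^*=(M_{\tilde\Phi}\oplus W_1)^*\tilde\Pi,\quad \tilde\Pi T_2^*=(M_{\tilde\Psi}\oplus W_2)^*\tilde\Pi,\quad \tilde\Pi T^*=(M_z\oplus W)^*\tilde\Pi.
\]
Taking $\clq=\mbox{ran}\,\tilde\Pi$ then realizes $T_1,T_2,T$ as compressions of $M_{\tilde\Phi}\oplus W_1$, $M_{\tilde\Psi}\oplus W_2$, $M_z\oplus W$ respectively, and the factorization $T=T_1T_2$ passed through $\tilde\Pi$ delivers the three-way equality $P_\clq(M_z\oplus W)|_\clq=P_\clq(M_{\tilde\Phi\tilde\Psi}\oplus W)|_\clq=P_\clq(M_{\tilde\Psi\tilde\Phi}\oplus W)|_\clq$.

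For the converse $(ii)\Rightarrow(i)$, the compressions $T_1,T_2$ are commuting contractions whose product equals $T$ by the joint invariance of $\clq$ and the hypothesised compression equality, and the $\clf_m(\clh)$ positivity condition then follows by a compression argument entirely parallel to Remark~\ref{R-general}. The main obstacle I foresee is the bookkeeping around the non-unitary isometry $V$: since the identity $M_{\tilde\Phi}=(I\otimes V^*)M_\Phi(I\otimes V)$ only tolerates insertion of $V^*V=I$ (and not $VV^*=I$, which genuinely fails in general), in the forward direction $V^*$ must be applied on the \emph{left} of the existing intertwiners from Theorem~\ref{general-dilation} rather than attempting to cancel $V$ on the right. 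Once this is handled, everything else reduces to a direct combination of the pure case argument with the unitary summand from Theorem~\ref{general-dilation}.
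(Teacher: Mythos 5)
Your forward direction $(i)\Rightarrow(ii)$ is exactly the paper's (implicit) argument: the paper proves the pure version by left-multiplying the intertwiners of Theorem~\ref{pure-dilation} by $I\otimes V^*$ (using $V^*V=I_{\cld_{m,T}}$ and $M_{\tilde\Phi}=(I\otimes V^*)M_{\Phi}(I\otimes V)$), and Theorem~\ref{general factorization} is then stated as the analogous combination with Theorem~\ref{general-dilation}; your map $\tilde\Pi=(\Pi_{m,T},Q_{m,T})$, the relations $W_i^*Q_{m,T}=Q_{m,T}T_i^*$, and the three-way compression identity are precisely what is intended, and your caution about only inserting $V^*V=I$ (never $VV^*$) is the right one.

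The weak point is your converse. Remark~\ref{R-general} works because the dilating pair $(M_{\Phi}\oplus W_1, M_{\Psi}\oplus W_2)$ itself lies in $\clf_m(A^2_m(\cle)\oplus\clr)$, so membership in the class passes to compressions on a joint co-invariant subspace. That premise fails here: as the paper itself notes, $\tilde\Phi$ and $\tilde\Psi$ need not commute, so with $R_1=M_{\tilde\Phi}\oplus W_1$, $R_2=M_{\tilde\Psi}\oplus W_2$ the pair $(R_1,R_2)$ is in general not an element of $\clf_m(A^2_m(\cld_{m,T})\oplus\clr)$, and the ambient operator you would need to compress, namely $\sum_{k=0}^{m-1}(-1)^k\binom{m-1}{k}\big((R_1R_2)^k(R_2^*R_1^*)^k-R_1(R_1R_2)^k(R_2^*R_1^*)^kR_1^*\big)$, has no reason to be positive on the whole space; so ``entirely parallel to Remark~\ref{R-general}'' is not a proof. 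A repair is available: read $(ii)$, as the forward construction provides, with $\clq$ also co-invariant under $(M_z\oplus W)^*$, so that $T\cong P_{\clq}(M_z\oplus W)|_{\clq}$ with $T^{*k}=(M_z\oplus W)^{*k}|_{\clq}$; then it suffices to establish the ambient inequalities $K_{m-1}^{-1}(M_z,M_z^*)\ge M_{\tilde\Phi}K_{m-1}^{-1}(M_z,M_z^*)M_{\tilde\Phi}^*$ and the analogue for $\tilde\Psi$ on $A^2_m(\cld_{m,T})$ (the $W_1$-summand contributes $K_{m-1}^{-1}(W_1 W_2,(W_1W_2)^*)=0$ for $m\ge 2$, and $m=1$ is trivial). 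These do hold, because $M_{\tilde\Phi}=(I\otimes V^*)M_{\Phi}(I\otimes V)$, the projection $I\otimes VV^*$ commutes with $M_z$ on $A^2_m(\cle)$, and the Section 3 computation gives $K_{m-1}^{-1}(M_z,M_z^*)\ge M_{\Phi}K_{m-1}^{-1}(M_z,M_z^*)M_{\Phi}^*$ on $A^2_m(\cle)$; compressing to $\clq$ then yields the $\clf_m(\clh)$ positivity for $(T_1,T_2)$, while commutativity and $T=T_1T_2$ follow from the displayed compression equalities, as you say.
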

An immediate consequence of the above results is a similar 
factorization result for subnormal operators. 
Recall that an operator is subnormal if it has a normal extension.
A well-known characterization of subnormal operator due to Agler 
is the following: a contraction $T$ on a Hilbert space $\clh$
is subnormal if and only if $T$ is an $m$-hypercontraction for 
all $m\in\N$ (see ~\cite{JA2}).  
We set 
\[\clf_{\infty}(\clh):=\bigcap_m\clf_m(\clh).\]
By the above characterization,
if $(T_1,T_2)\in \clf_{\infty}(\clh)$
then $T=T_1T_2$ is a subnormal operator. 
Thus $\clf_{\infty}(\clh)$ contains contractive factors 
of subnormal operators on $\clh$. A characterization of 
$\clf_{\infty}(\clh)$ is in order. 

\begin{Theorem}
\label{subnormal factorization}
 Let $T$ be a subnormal operator on $\clh$. Then the following 
 are equivalent.
 \begin{enumerate}
  \item[\textup{$(i)$}]
 $T=T_1T_2$ for some $(T_1,T_2)\in\clf_{\infty}(\clh)$;
 \item [\textup{$(ii)$}]
 for each $m\in\N$, there exist a commuting pair of unitaries 
 $(W_{1,m},W_{2,m})$ on a Hilbert space $\clr_m$ with $W_m=W_{1,m}W_{2,m}$
 and $\clb(\cld_{m,T})$-valued Schur functions
 \[
\tilde{\Phi}_m(z)=V_m^*(P_m+zP_m^{\perp})U_m^*V_m,\ \text{ and } 
\tilde{\Psi}_m(z)= V_m^*U_m(P_m^{\perp}+zP_m)V_m
\quad (z\in\D)
 \]
for some Hilbert space $\cle_m$, isometry $V_m:\cld_{m,T}\to \cle_m$,
unitary $U_m:\cle_m\to \cle_m$ and 
projection $P_m$ in $\clb(\cle_m)$
such that $\clq_m$ is a joint
 $(M_{\tilde{\Phi}_m}^*\oplus W_{1,m}^*, M_{\tilde{\Psi}_m}^*\oplus W_{2,m}^*)$-invariant subspace of $A^2_m(\cld_{m,T})\oplus \clr_m$,
 \[
  P_{\clq_m}(M_z\oplus W_m)|_{\clq_m}=
  P_{\clq_m}(M_{\tilde{\Phi}_m\tilde{\Psi}_m}\oplus W_m)|_{\clq_m}
  =P_{\clq_m}(M_{\tilde{\Psi}_m\tilde{\Phi}_m}\oplus W_m)|_{\clq_m},
 \]
and 
\[
 T_1\cong P_{\clq_m}(M_{\tilde{\Phi}_m}\oplus W_{1,m})|_{\clq_m},\
 T_2\cong P_{\clq_m}(M_{\tilde{\Psi}_m}\oplus W_{2,m})|_{\clq_m}.
\]
 \end{enumerate}
 \end{Theorem}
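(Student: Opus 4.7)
The proof is essentially a level-by-level application of Theorem~\ref{general factorization}, so the main task is to verify that no new ingredient is needed beyond Agler's characterization of subnormal contractions and the definition of $\clf_\infty(\clh)$. First I would recall that, by Agler's theorem (see \cite{JA2}), a contraction $T$ on $\clh$ is subnormal if and only if $T$ is an $m$-hypercontraction for every $m\in\N$. Combined with Lemma~\ref{sufficient} this shows that the hypothesis ``$T$ is subnormal'' is the natural one: if $(T_1,T_2)\in\clf_m(\clh)$ for every $m$, then $T=T_1T_2$ is an $m$-hypercontraction for every $m$, hence subnormal; conversely, the defect spaces $\cld_{m,T}$ and operators $D_{m,T}$ appearing in the statement are well-defined for every $m$.

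For the forward direction, assume $(T_1,T_2)\in\clf_\infty(\clh)$. By definition $(T_1,T_2)\in\clf_m(\clh)$ for every $m\in\N$, and by Lemma~\ref{sufficient} $T=T_1T_2$ is subnormal. Now I would simply apply Theorem~\ref{general factorization} separately for each $m\in\N$: this produces, at each level $m$, the Hilbert space $\cle_m$, the isometry $V_m$, the unitary $U_m$, the projection $P_m$, the commuting unitaries $(W_{1,m},W_{2,m})$ on some $\clr_m$, the resulting $\clb(\cld_{m,T})$-valued Schur functions $\tilde\Phi_m,\tilde\Psi_m$ as displayed in the statement, and the joint invariant subspace $\clq_m\subset A^2_m(\cld_{m,T})\oplus \clr_m$ realizing the desired compressions. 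Assembling these data across all $m$ gives exactly condition~(ii).

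For the converse, suppose (ii) holds. Fix any $m\in\N$; then the data at level $m$ is precisely what the equivalence (i)$\Leftrightarrow$(ii) of Theorem~\ref{general factorization} requires in order to conclude that $(T_1,T_2)\in\clf_m(\clh)$ with $T=T_1T_2$. Since $m$ was arbitrary, $(T_1,T_2)\in\bigcap_{m\in\N}\clf_m(\clh)=\clf_\infty(\clh)$, which is condition~(i).

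There is no real obstacle here: both implications are coordinate-wise in $m$, and the only conceptual point is the identification $\clf_\infty(\clh)=\bigcap_m \clf_m(\clh)$ together with Agler's theorem, which make the appearance of the families $(\cle_m,U_m,P_m,V_m,W_{1,m},W_{2,m},\clq_m)$ indexed by $m$ natural rather than artificial. Nothing in the proof requires compatibility between the dilation data at different levels $m$, so no coherence or inverse-limit argument is needed; the theorem is genuinely just a uniform restatement of Theorem~\ref{general factorization}.
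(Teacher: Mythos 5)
Your proposal is correct and matches the paper's intent exactly: the paper presents Theorem~\ref{subnormal factorization} as an immediate consequence of Theorem~\ref{general factorization} applied separately for each $m$, together with Agler's characterization of subnormality and the definition $\clf_{\infty}(\clh)=\bigcap_m\clf_m(\clh)$, which is precisely your level-by-level argument. Your closing observation that no compatibility between the dilation data at different levels $m$ is needed is also consistent with the paper, which offers no such coherence and states the result without further proof.
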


\section{Examples and concluding remark}
In this section, we find an example of a pair of commuting $2\times 2$ contractive 
matrices 
such that their product is a $2$-hypercontraction but the pair fails to belong 
in $\clf_2(\C^2)$. 

\textbf{Example}:
For a real number $0<r \le 1$, consider a \,$2 \times 2$\, matrix\, 
$T_r:= \begin{bmatrix}
   0 & r \cr 0 & 0
   \end{bmatrix}$. Then by a direct calculation, it can be checked that 
$T_r $ is a $2$-hypercontraction if and only if $r^2 \leq \frac{1}{2}$.
Also for strictly positive real numbers $a$ and $b$, consider the matrix $S= \begin{bmatrix}
   a & b \cr 0 & a 
\end{bmatrix}$. Then $S$ is an invertible matrix and $S$ commutes with $T_r$ for any $r$. Thus, for $r\le \frac{1}{\sqrt 2}$, $T_rS^{-1}$ and $S$ are factors of 
the $2$-hypercontraction $T_r$. On the other hand, again by a simple direct calculation, we have 
\begin{equation} 
\label{szego}
K^{-1}_1(T_r,T_r^*)-SK_1^{-1 }(T_r,T_r^*)S^*= \begin{bmatrix}
(1-r^2)(1-a^2) - b^2  &   -ab  \cr  -ab  &  1-a^2 
\end{bmatrix}.
\end{equation}
Also note that $S$ is a contraction if and only if $b\le 1-a^2$. So for the particular choice $r=\frac{1}{\sqrt{2}}$, $a=\frac{1}{\sqrt{2}}$
and $b=\frac{1}{2}$, we see that $T_r$ is a $2$-hypercontraction, $S$ and $T_rS^{-1}$ are contractions and 
\[
 K^{-1}_1(T_r,T_r^*)-SK_1^{-1 }(T_r,T_r^*)S^*= \begin{bmatrix}
0 &   -\frac{1}{2\sqrt{2}}  \cr  -\frac{1}{2\sqrt{2}}  &  \frac{1}{2} 
\end{bmatrix}
\]
is not a positive matrix. Therefore for such a particular choice, the contractions $T_rS^{-1}$ and $S$ are factors of the $2$-hypercontraction $T_r$ but 
$(T_rS^{-1}, S)\notin \clf_2(\C^2)$.

The above example shows that $\clf_m(\clh)$ does not contain all the 
contractive factors of $m$-hypercontractions on $\clh$ and the present article 
characterise a subclass of contractive factors of $m$-hypercontractions, namely 
$\clf_m(\clh)$. We conclude the paper with the following natural question: 
How to characterize all the factors of $m$-hypercontractions?

\vspace{0.1in} \noindent\textbf{Acknowledgement:}
The authors are grateful to Prof. Jaydeb Sarkar for his 
inspiration and valuable comments.
The research of the first name author is supported by the 
institute post doctoral fellowship of IIT Bombay.
The research of the second named author is supported by
DST-INSPIRE Faculty Fellowship No. DST/INSPIRE/04/2015/001094.

\bibliographystyle{amsplain}

\end{document}